\def\reflb#1#2{\begingroup
    #2%
    \def\@currentlabel{#2}%
    \phantomsection\label{#1}\endgroup
}
\definecolor{darkred}{rgb}{1,0,0} 
\definecolor{darkgreen}{rgb}{0,0.8,0}
\definecolor{darkblue}{rgb}{0,0,1}
\newtheorem{thm}{Theorem}
\numberwithin{thm}{section}
\numberwithin{equation}{section}
\newtheorem{theorem}[thm]{Theorem}
\newtheorem*{theorem*}{Theorem}
\newtheorem*{corollary*}{Corollary}
\newtheorem{proposition}[thm]{Proposition}
\newtheorem*{conjecture*}{Conjecture}
\newtheorem*{question*}{Question}
\newtheorem{definition}[thm]{Definition}
\newtheorem*{definition*}{Definition}
\newtheorem*{definitions*}{Definitions}
\newtheorem*{rem*}{Remark}
\theoremstyle{remark}
\newtheorem{remark}[thm]{Remark}
\newtheorem*{remark*}{Remark}
\newtheorem*{remarks*}{Remarks}
\newtheorem*{example*}{Example}
\newtheorem{example}[thm]{Example}
\newtheorem*{examples*}{Examples}
\newcommand{\R}{\mathbb{R}}
\newcommand{\Z}{\mathbb{Z}}
\newcommand{\Q}{\mathbb{Q}}
\newcommand{\C}{\mathbb{C}}
\newcommand{\N}{\mathbb{N}}
\newcommand{\T}{\mathbb{T}}
\def\RP{{\mathbb R}P}
\def\PP{{\mathbb P}}
\def\U{\operatorname{U}}
\newcommand{\ep}{\epsilon}
\newcommand{\ga}{\gamma}
\newcommand{\Ga}{\Gamma}
\newcommand{\om}{\omega}
\newcommand{\Sp}{\mathrm{Sp}}
\def\cz{{\mu}}
\def\P{{\mathcal P}}
\def\PP{{\mathscr P}}
\def\HC{{\mathrm{HC}}}
\def\SH{{\mathrm{SH}}}
\def\CC{{\mathrm{CC}}}
\def\H{{\mathrm{H}}}
\def\bga{{\bar\gamma}}
\def\S{{\Sigma}}
\def\L{L^{2n+1}_p(\ell_0,\dots,\ell_n)}
\def\balpha{{\bar\alpha}}
\def\A{{\mathcal{A}}}
\def\G{{\mathcal{G}}}
\def\age{{\text{age}}}
\def\bx{{\bar x}}
\def\W{{\widehat W}}
\newcommand{\veps}{\varepsilon}
\def\Im{{\text{Im}}}
\def\tga{{\widetilde\gamma}}
\def\bga{{\bar\gamma}}
\begin{document}

\title[Symmetric periodic Reeb orbits on the sphere]{Symmetric periodic Reeb orbits on the sphere}

\author[Miguel Abreu]{Miguel Abreu}
\author[Hui Liu]{Hui Liu}
\author[Leonardo Macarini]{Leonardo Macarini}

\address{Center for Mathematical Analysis, Geometry and Dynamical Systems,
Instituto Superior T\'ecnico, Universidade de Lisboa, 
Av. Rovisco Pais, 1049-001 Lisboa, Portugal}
\email{mabreu@math.tecnico.ulisboa.pt}

\address{School of Mathematics and Statistics, Wuhan University, Wuhan 430072, Hubei, China}
\email{huiliu00031514@whu.edu.cn}

\address{IMPA,
Estrada Dona Castorina, 110, Rio de Janeiro, 22460-320, Brazil (on leave from Instituto Superior T\'ecnico, University of Lisbon)}
\email{leonardo@impa.br}

\subjclass[2010]{53D40, 37J10, 37J55} \keywords{Closed orbits, Reeb flows, dynamical convexity, equivariant symplectic homology}

\thanks{MA and LM were partially funded by FCT/Portugal through UID/MAT/04459/2020 and PTDC/MAT-PUR/29447/2017. LM was also partially funded by CNPq, Brazil. HL was partially supported by NSFC (Nos. 12371195, 12022111) and the Fundamental Research Funds for the Central Universities (No. 2042023kf0207).}

\begin{abstract}
A long standing conjecture in Hamiltonian Dynamics states that every contact form on the standard contact sphere $S^{2n+1}$ has at least $n+1$ simple periodic Reeb orbits. In this work, we consider a refinement of this problem when the contact form has a suitable symmetry and we ask if there are at least $n+1$ simple symmetric periodic orbits. We show that there is at least one symmetric periodic orbit for any contact form and at least two symmetric closed orbits whenever the contact form is dynamically convex.
\end{abstract}

\maketitle

\tableofcontents

\section{Introduction}

Consider the unit sphere $S^{2n+1}=\{v \in \R^{2n+2};\,\|v\|=1\}$ and the Liouville form $\lambda=\frac 12 \sum_{i=1}^{n+1} q_idp_i - p_idq_i$. The standard contact structure on $S^{2n+1}$ is given by $\xi=\ker \lambda|_{S^{2n+1}}$ and a contact form supporting this (cooriented) structure is a 1-form $\alpha=f\lambda|_{S^{2n+1}}$, where $f: S^{2n+1} \to \R$ is a positive function. Associated to $\alpha$ we have its Reeb vector field $R_\alpha$ uniquely characterized by the equations $\iota_{R_\alpha} d\alpha=0$ and $\alpha(R_\alpha)=1$. Reeb flows form a prominent class of Hamiltonian systems on energy levels and the study of Reeb flows on the standard contact sphere $S^{2n+1}$ is equivalent to the study of Hamiltonian flows of proper homogeneous of degree two Hamiltonians $H: \R^{2n+2} \to \R$.

Let us denote by $\P$ the set of simple (i.e., non-iterated) closed Reeb orbits of $\alpha$. A long standing conjecture in Hamiltonian Dynamics establishes that for every contact form $\alpha$ on $S^{2n+1}$ we have that $\#\P \geq n+1$. It was proved for $n=1$ by Cristofaro-Gardiner and Hutchings \cite{CGH} (in the more general setting of Reeb flows in dimension three) and independently by Ginzburg, Hein, Hryniewicz and Macarini \cite{GHHM}; see also \cite{LL} where an alternate proof was given using a result from \cite{GHHM}. In higher dimensions, the conjecture is completely open without additional assumptions on $\alpha$, such as convexity or certain index requirements or non-degeneracy of closed Reeb orbits. For instance, it is not know if every contact form on $S^5$ has at least two simple closed Reeb orbits (although the conjecture states that we should have at least three closed orbits).

In order to explain the convexity assumption, note that there is a natural bijection between contact forms $\alpha$ on $(S^{2n+1},\xi)$ and starshaped hypersurfaces $\S_\alpha$ in $\R^{2n+2}$  given by
\[
\alpha=f\lambda|_{S^{2n+1}} \longleftrightarrow\Sigma_\alpha=\{\sqrt{f(x)}x;\, x\in S^{2n+1}\}
\]
and it satisfies the property that if $\Sigma_\alpha$ is the energy level of a homogeneous of degree two Hamiltonian $H_\alpha: \R^{2n+2} \to \R$ then the Hamiltonian flow of $H_\alpha$ on $\Sigma_\alpha$ is equivalent to the Reeb flow of $\alpha$. We say that $\alpha$ is \emph{convex} if $\Sigma_{\alpha}$ bounds a strictly convex domain.

When $\alpha$ is convex, Ekeland and Hofer proved in \cite{EH} that $\#\P \geq 2$. Later on, Long and Zhu in the remarkable work \cite{LZ} showed that $\#\P\geq \lfloor (n+1)/2 \rfloor +1$. This result was improved when $n$ is even by Wang \cite{Wa}, furnishing the lower bound $\#\P\geq \lceil (n+1)/2 \rceil +1$. In particular, $\#\P \geq n+1$ for every convex contact form on $S^{2n+1}$ when $n=2$ (it was proved before in \cite{HLW}). In \cite{Wa2}, Wang proved that it also holds when $n=3$.

The notion of convexity is not natural from the point of view of Symplectic Topology since it is not a condition invariant under contactomorphisms. An alternative definition was introduced by Hofer, Wysocki and Zehnder \cite{HWZ}, called \emph{dynamical convexity}. A contact form on $S^{2n+1}$ is dynamically convex if the Conley-Zehnder index of every closed orbit is bigger than or equal to $n+2$. It is not hard to see that every convex contact form is dynamically convex. Clearly, dynamical convexity is invariant under contactomorphisms. Moreover, dynamical convexity is more general than convexity: there are contact forms that are dynamically convex and are not contactomorphic to a convex one \cite{CE1,CE2}; see also \cite{AM,GM}.

When $\alpha$ is dynamically convex, Ginzburg and G\"urel \cite{GG} and, independently, Duan and Liu \cite{DL}, proved that $\#\P\geq \lceil (n+1)/2 \rceil +1$, generalizing the previous works of Long-Zhu and Wang.

In this work, we will consider a refinement of this conjecture when the contact form has a symmetry. More precisely, given an integer $p>0$, consider the $\Z_p$-action on $S^{2n+1}$, regarded as a subset of $\C^{n+1}\setminus \{0\}$, generated by the map
\begin{equation}
\label{eq:Z_p-action}
\psi(z_0, \dots, z_n) = \left(e^{\frac{2\pi\sqrt{-1}\ell_0}{p}}z_0, e^{\frac{2\pi\sqrt{-1}\ell_1}{p}}z_1, \dots, e^{\frac{2\pi\sqrt{-1}\ell_n}{p}}z_n \right),
\end{equation}
where $\ell_0, \ldots, \ell_n$ are integers called the \emph{weights} of the action. Such an action is free when the weights  are coprime with $p$ and in that case we have a lens space obtained as the quotient of $S^{2n+1}$ by the action of $\Z_p$. We denote such a lens space by $L_p^{2n+1}(\ell_0, \ell_1, \ldots, \ell_n)$. From now on, we will assume that the weights are coprime with $p$.

The question that we will address in this work is the following one. Given the action of a Lie group $G$ on a manifold $M$ and a vector field $X$ on $M$ invariant under this action, we say that a periodic orbit $\ga$ of $X$ is \emph{symmetric} if $g(\text{Im}(\ga))=\text{Im}(\ga)$ for every $g \in G$.

\vskip .2cm
\noindent
{\bf Question:} Let $\alpha$ be a contact form on $S^{2n+1}$ invariant under the above $\Z_p$-action generated by $\psi$. Denote by $\P_s$ the set of simple \emph{symmetric} closed orbits of the Reeb flow of $\alpha$. Is it true that $\#\P_s \geq n+1$?
\vskip .2cm

Note that when $p=1$ the $\Z_p$-action is trivial and therefore every periodic orbit is symmetric. Thus, in this case the question is equivalent to the aforementioned problem if every contact form on $S^{2n+1}$ carries at least $n+1$ closed orbits. Moreover, the irrational ellipsoids are invariant under this $\Z_p$-action for any $p$ and weights $\ell_0, \ldots, \ell_n$ and carry precisely $n+1$ periodic orbits which are all symmetric.

Girardi proved in \cite{Gi} that when $p=2$, that is, when $\psi$ is the antipodal map, we have that $\#\P_s \geq 1$ for any symmetric contact form. In the same work, it was proved that if $\Sigma_\alpha$ satisfies suitable pinching conditions then $\#\P_s \geq n+1$ when $p=2$. This result was generalized by Abreu and Macarini in \cite[Theorem 1.21]{AM} for any $p$ assuming that $\alpha$ is convex and $\ell_0=\dots=\ell_n=1$. In \cite{Bah}, B\"ahni showed that $\#\P_s \geq 1$ for any symmetric contact form when $p$ is even.

When $\alpha$ is convex and symmetric, for any $p$ and weights $\ell_0,\dots,\ell_n$, it was proved by Zhang \cite{Zha} that $\#\P_s \geq 2$. This result was generalized to dynamically convex contact forms by Liu and Zhang \cite{LiZ} assuming that $p=2$.

The main results in this work are the following generalizations of these results.

\begin{theorem}
\label{thm:existence}
Let $\alpha$ be any contact form on $S^{2n+1}$ invariant under the above $\Z_p$-action. Then $\alpha$ has at least one symmetric closed orbit.
\end{theorem}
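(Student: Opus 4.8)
The plan is to pass to the quotient lens space $L := L_p^{2n+1}(\ell_0,\dots,\ell_n)$ and detect a closed Reeb orbit there whose lift to $S^{2n+1}$ is symmetric. Since $\alpha$ is $\Z_p$-invariant and the action is free, $\alpha$ descends to a contact form $\bar\alpha$ on $L$ supporting the induced contact structure, and its Reeb flow is the quotient of the Reeb flow of $\alpha$. A closed orbit $\bar\gamma$ of $R_{\bar\alpha}$ lifts to a path in $S^{2n+1}$ from a point $x$ to $\psi^k(x)$ for some $k$; if $\bar\gamma$ is simple then the lifted orbit, closed up after $p/\gcd(p,k)$ iterates, has image invariant under $\psi$, hence is a symmetric orbit of $R_\alpha$ (though possibly not simple — but any symmetric orbit contains a simple symmetric orbit, or at least a simple orbit whose image is $\psi$-invariant, which is what the statement requires after taking the underlying geometric orbit). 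So the problem reduces to: \emph{every contact form on a lens space (supporting the standard contact structure) has at least one closed Reeb orbit.}

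The cleanest route to the latter is via (positive $S^1$-equivariant) symplectic homology of the filling. The lens space $L$ is the boundary of a Liouville (indeed Weinstein) domain $\W$ — e.g. the quotient of the ball, or a disk bundle — with $c_1$ torsion, and the standard machinery (Viterbo, and the equivariant refinements of Bourgeois–Oancea) gives: if $\bar\alpha$ had \emph{no} closed Reeb orbit, then $\SH^{S^1}_*(\W)$ (or already $\SH_*(\W)$) would vanish in all degrees, because the only contribution to symplectic homology beyond the filling comes from closed Reeb orbits. But $\SH^{S^1}_*(\W)$ can be computed to be nonzero: the Gysin/Viterbo exact sequence relates it to the singular homology of $\W$ (or of $L$), which is nontrivial (e.g. $H_0 \ne 0$, and the relevant exact triangle forces the symplectic homology to be nonzero in infinitely many degrees, since $L$ has infinite-dimensional total equivariant cohomology $H^*_{S^1}$ coming from $H^*(BS^1)$-module structure on $H^*(L)$). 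This contradiction yields the existence of at least one closed Reeb orbit downstairs, hence at least one symmetric closed Reeb orbit upstairs.

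Concretely, the steps are: (1) check that $\alpha$ descends to $\bar\alpha$ on $L$ and that symmetric orbits of $\alpha$ correspond to closed orbits of $\bar\alpha$; (2) identify a Liouville filling $\W$ of $(L,\ker\bar\alpha)$ with $c_1(\W)$ torsion so that graded symplectic homology and its $S^1$-equivariant version are defined; (3) recall the transfer/Viterbo long exact sequence $\cdots \to H_{*+n+?}(\W) \to \SH_*(\W) \to \SH_*^{>0}(\W)\to\cdots$ (and its equivariant analogue), noting that the high-action/high-index part $\SH^{>0}$ is built entirely from closed Reeb orbits and so vanishes if there are none; (4) deduce that absence of closed orbits forces $\SH_*(\W)\cong H_{*+n+1}(\W,\partial \W)$ to be the homology of the filling in a single band of degrees — and then observe the $S^1$-equivariant version cannot vanish/be finite in the required way, producing the contradiction. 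The main obstacle is step (4): making the homological computation clean and genuinely robust, i.e. being sure that $\SH^{S^1}$ of the filling is forced to be ``large'' (infinitely supported or at least nonzero in a degree not accounted for by the filling) purely from topology, without accidentally choosing a filling with vanishing symplectic homology. One must therefore argue either with the action filtration directly — showing the Floer complex on the filling is finite-dimensional while some index window must contain a generator corresponding to an orbit — or invoke the known fact that lens spaces are not subcritically fillable / have nonvanishing (equivariant) symplectic homology, which is where the real content lies.
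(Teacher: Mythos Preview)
Your reduction in step (1) is incorrect, and this is the main gap. A closed Reeb orbit $\bar\gamma$ on $L$ with free homotopy class $k\in\Z_p$ lifts to a closed orbit on $S^{2n+1}$ only after $p/\gcd(k,p)$ iterates, and the $\gcd(k,p)$ distinct lifts are \emph{permuted} by $\psi$; a single lift is $\psi$-invariant precisely when $\gcd(k,p)=1$, i.e.\ when $k$ is a generator of $\pi_1(L)$. The paper states this explicitly: simple symmetric orbits upstairs correspond to simple orbits downstairs whose homotopy class \emph{generates} $\pi_1(L)$. So it is not enough to produce \emph{some} closed Reeb orbit on $L$ (that would just be Rabinowitz's theorem applied to a cover); you must produce one in a prescribed nontrivial free homotopy class. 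Your steps (3)--(4), which aim only at ``$\SH^{S^1}\ne 0$'', do not see the decomposition by free homotopy classes and cannot deliver this.

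Step (2) also fails as written: the quotient of the ball $\C^{n+1}/\Z_p$ has an isolated singularity at the origin and is not a smooth Liouville domain, and a general lens space $L^{2n+1}_p(\ell_0,\dots,\ell_n)$ need not admit any smooth exact filling (disk bundles give only very special weights). The paper confronts this head-on by using the \emph{orbifold} filling $W=\C^{n+1}/\Z_p$ and the orbifold symplectic cohomology of Gironella--Zhou. The tautological triangle for $W$ has $\SH^*_-(W)\cong \H^*_{CR}(W)$ (Chen--Ruan cohomology), and since $\SH^*(W)=0$ one gets $\SH^*_+(W)\cong \H^*_{CR}(W)$. The point is that both sides are filtered by $\pi_1^{\mathrm{orb}}(W)\cong\Z_p$, and the Chen--Ruan cohomology has exactly one $\Q$ in \emph{each} sector $k\in\Z_p$ (coming from the twisted sectors of the orbifold). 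Hence $\SH^*_{k,+}(W)\neq 0$ for every $k$, which forces a closed Reeb orbit in every free homotopy class on $L$, in particular in a generating class. This homotopy-class refinement, carried by the twisted sectors of Chen--Ruan cohomology, is the idea your proposal is missing.
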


This result generalizes the results of Rabinowitz \cite{Ra} (that corresponds to the case $p=1$), Girardi \cite{Gi} (that corresponds to the case $p=2$) and B\"ahni \cite{Bah} (that corresponds to the case that $p$ is even).

\begin{theorem}
\label{thm:multiplicity}
Let $\alpha$ be a dynamically convex contact form on $S^{2n+1}$ invariant under the above $\Z_p$-action. Then $\alpha$ has at least two simple symmetric closed orbits.
\end{theorem}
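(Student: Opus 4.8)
The plan is to work on the quotient lens space $L=L_p^{2n+1}(\ell_0,\dots,\ell_n)$ and use equivariant symplectic cohomology, exactly as the set-up in the introduction suggests. The key observation is that a symmetric closed Reeb orbit of $\alpha$ on $S^{2n+1}$ descends to a closed Reeb orbit of the induced contact form $\bar\alpha$ on $L$ (its image being $\Z_p$-invariant, the projection is a closed curve), and conversely every closed Reeb orbit of $\bar\alpha$ lifts to a symmetric orbit upstairs. Thus Theorem~\ref{thm:multiplicity} reduces to showing that a dynamically convex $\bar\alpha$ on the lens space $L$ carries at least two geometrically distinct closed Reeb orbits. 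The first step is therefore to carefully set up this correspondence and to record how the Conley--Zehnder indices transform under the quotient — the dynamical convexity hypothesis on $\alpha$ upstairs must be translated into an index lower bound for orbits of $\bar\alpha$ downstairs, taking into account the mean index shift coming from the weights and the order $p$ of the action.

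Next I would compute the positive $S^1$-equivariant symplectic cohomology $\SH^{+,S^1}_*(W)$ of the filling $W$ of $L$ (the quotient of the ball, a symplectic orbifold resolution or, better, working directly with the Liouville domain whose boundary is $L$). As in \cite{GG,DL} for the sphere, the strategy is: first, dynamical convexity forces all generators of the equivariant chain complex to sit in a favorable index range, so that the differential vanishes for index-parity reasons on a suitable range; second, one compares the resulting groups with the known value of $\SH^{+,S^1}_*$ for $L$ — which for the standard contact structure on a lens space is the equivariant cohomology of the free loop space component, computable from the topology of $L$ and in particular is nonzero in infinitely many degrees. If $\bar\alpha$ had a unique simple closed orbit $\bar\gamma$, all generators would be iterates of $\bar\gamma$, and a mean-index/resonance argument (a Common Index Jump-type theorem, or the "only one orbit" rigidity used in \cite{GHHM,GG}) shows the Euler characteristic or the total rank of $\SH^{+,S^1}$ in a window of length the mean index of $\bar\gamma$ is bounded by a constant, contradicting the unbounded growth of $\dim\SH^{+,S^1}_*(W)$.

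A cleaner route, and probably the one I would actually push through, is to invoke Theorem~\ref{thm:existence}: it already gives one symmetric orbit $\gamma_1$, i.e. one orbit $\bar\gamma_1$ of $\bar\alpha$ on $L$. Assume for contradiction it is the only one. Then the full free-loop-space equivariant symplectic homology of $W$ is "generated" by $\bar\gamma_1$ and its iterates. Using dynamical convexity, every iterate $\bar\gamma_1^{k}$ has Conley--Zehnder index $\geq$ some explicit bound growing linearly in $k$, with a controlled parity; by the Gutt--Hutchings-type computation the equivariant symplectic homology of $W$ in high degrees is a nonzero fixed group occurring with period essentially equal to the mean index $\hat\mu(\bar\gamma_1)$, while the "local" contribution of the single orbit $\bar\gamma_1$ (local Floer homology of all iterates, via the precise index formula) cannot reproduce this pattern unless $\hat\mu(\bar\gamma_1)$ takes a very special rational value — which, combined with the index gap coming from dynamical convexity and the nontriviality of $\H_*(L)$ versus $\H_*(S^{2n+1})$, is impossible. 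This yields the second orbit.

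The main obstacle I anticipate is purely index-theoretic: correctly tracking the Conley--Zehnder / mean index of iterates of a symmetric orbit through the $p$-fold quotient, since the relevant iterate on $S^{2n+1}$ of a symmetric orbit $\gamma$ need not be the simple lift of $\bar\gamma$ — one has $\gamma$ traversed once corresponding to $\bar\gamma$ traversed some divisor of $p$ times — so the index formula acquires extra rotation numbers determined by the weights $\ell_i$ and by how the orbit meets the fixed-point structure of $\psi$. Getting the dynamical convexity bound $\mu_{CZ}(\gamma)\geq n+2$ to translate into a clean statement about $\bar\gamma$ and all its iterates, uniformly in the combinatorics of $(p;\ell_0,\dots,\ell_n)$, is the delicate point; once that bookkeeping is in place, the equivariant symplectic cohomology argument runs parallel to \cite{GG,DL,LiZ}.
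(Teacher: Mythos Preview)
Your overall architecture---pass to the lens space, compute positive $S^1$-equivariant symplectic cohomology, and rule out a single orbit by a mean-index count---is the paper's. But two concrete points are off. First, the reduction is wrong as stated: it is \emph{not} true that every closed orbit of $\bar\alpha$ on $L$ lifts to a symmetric orbit upstairs. An orbit on $L$ with homotopy class $k\in\Z_p$ lifts to $\gcd(k,p)$ disjoint circles in $S^{2n+1}$ permuted by the action; only when $k$ generates $\Z_p$ is the lift a single symmetric orbit. Hence Theorem~\ref{thm:multiplicity} does not reduce to ``$L$ has two geometrically distinct closed orbits''---two orbits with non-generator class would not suffice. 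The paper fixes one free homotopy class $a$ and shows there are two simple orbits with that class. Second, the phrase ``the filling $W$ of $L$ \dots\ or, better, working directly with the Liouville domain whose boundary is $L$'' papers over the main new input. General lens spaces are not known to admit smooth Liouville fillings with computable $\SH^{S^1}_+$; the paper uses the \emph{orbifold} filling $W=\C^{n+1}/\Z_p$ and the orbifold symplectic cohomology of Gironella--Zhou. Their vanishing $\SH^*(W)=0$ and the tautological triangle give $\SH^*_+(W)\cong\H^*_{CR}(W)$, a finite explicit object; a neck-stretch identifies this with $\SH^*(M)$ defined in the symplectization, and the Gysin triangle then forces the shift operator $D:\HC^k_a(M)\to\HC^{k+2}_a(M)$ to be an isomorphism for $k\ge 2n+1$. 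A separate toric computation (Proposition~\ref{prop:HC}) shows $\HC^*_a(M)\cong\Q$ in degrees $k_a+2\N_0$, which is what feeds the Lusternik--Schnirelmann carrier map; ``nonzero in infinitely many degrees'' is not enough.

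Finally, the obstacle you anticipate---tracking Conley--Zehnder indices through the $p$-fold cover with all the weights---is not where the difficulty lies. Once the carrier map $\psi:\N_0\to\P_a$ is in place, the endgame is a two-line density count: the carrier sequence has density $1/2$, the iterates $\bar\gamma^{ip+1}$ of a single simple orbit $\bar\gamma$ with class $a$ have index density $1/(p\Delta(\bar\gamma))$, and dynamical convexity upstairs gives $\Delta(\bar\gamma^p)>2$ (via \cite{Lon00}), hence $1/\Delta(\bar\gamma)<p/2$, contradicting $p/2\le 1/\Delta(\bar\gamma)$. No Common Index Jump, no parity argument, and no detailed bookkeeping of the weights $\ell_i$ is needed at this stage.
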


The last result generalizes the results of Zhang \cite{Zha} (that corresponds to the case that $\alpha$ is convex) and Liu-Zhang \cite{LiZ} (that corresponds to the case $p=2$).

Let $\beta$ be the induced contact form on $\L$. A natural question is how the previous results can be seen in terms of periodic orbits of $\beta$. The simple unparametrized symmetric closed orbits of $\alpha$ are in bijection with the simple unparametrized closed orbits of $\beta$ whose homotopy classes are generators of $\pi_1(\L)$; see Proposition \ref{prop:symmetric orbit}. Thus, Theorem \ref{thm:existence} is equivalent to the statement that given any $a \in \pi_1(\L)$ then every contact form $\beta$ on $\L$ has at least one periodic orbit with homotopy class $a$. Theorem \ref{thm:multiplicity}, in turn, is equivalent to the statement that given any $a \in \pi_1(\L)$ and a dynamically convex contact form $\beta$ on $\L$ (we say that $\beta$ is dynamically convex if so is its lift to $S^{2n+1}$) we have at least two \emph{$a$-simple} periodic orbits, where a periodic orbit is called $a$-simple if it has homotopy class $a$ and it is not an iterate of another periodic orbit with homotopy class $a$, although it can be the iterate of a closed orbit with another homotopy class.
 
\vskip .3cm
\noindent {\bf Organization of the paper. }
The rest of the paper is organized as follows. The background on symplectic homology and Lusternik-Schnirelmann theory in Floer homology necessary for this work is presented in Section \ref{sec:SH&LS}. Symplectic homology/cohomology of orbifolds is discussed in Section \ref{sec:SHO}, where there is also a brief explanation of Chen-Ruan cohomology in Section \ref{sec:CR}. The proofs of Theorems \ref{thm:existence} and \ref{thm:multiplicity} are given in Sections \ref{sec:proof existence} and \ref{sec:proof multiplicity} respectively.
 
\vskip .2cm
\noindent {\bf Acknowledgements.}
The third author is grateful to Alexandru Oancea and Zhengyi Zhou for useful discussions regarding this paper. We are grateful to a referee for his/her helpful suggestions.

\section{Symplectic homology and Lusternik-Schnirelmann theory}
\label{sec:SH&LS}

\subsection{Sign conventions}
\label{sec:signs}

Our sign conventions agree with the ones in \cite{GG}. In what follows, we will explain these conventions and compare them with the ones in \cite{GZ}. It will be important to relate, in Section \ref{sec:sh-orb}, the equivariant symplectic homology in \cite{GG} with the equivariant symplectic \emph{co}homology in \cite{GZ}.

Given an exact symplectic manifold $(W,\om=d\lambda)$, a Hamiltonian $H_t: W \to \R$ and a closed curve $\ga: S^1 \to W$ we take the action functional
\[
A_H(\ga) =  \int_\ga \lambda - \int_0^1 H_t(\ga(t))\,dt
\]
as in \cite{GG}. This is \emph{minus} the action functional in \cite{GZ}. Thus, in both works Hamilton's equation, whose closed orbits are critical points of the action functional,  is $i_{X_{H_t}}\om=-dH_t$. The Conley-Zehnder index $\cz$ is normalized so that, when $S$ is a small positive definite symmetric matrix, the symplectic path $\Ga: [0,1] \to \Sp(2n)$ given by $\Ga(t)=\exp(tJ_0S)$ has $\cz(\Ga)=n$, where $J_0$ is the complex structure such that $\om_0(\cdot,J_0\cdot)$ is a metric, with $\om_0=\sum_i dq_i \wedge dp_i$ being the canonical symplectic form in $\R^{2n}$. It agrees with the conventions in \cite{GG} and \cite{GZ}.

As in \cite{GG}, a compatible almost complex structure $J$ on $W$ is defined by the condition that $\om(J\cdot,\cdot)$  (with $J$ acting in the \emph{first} argument) is a Riemannian metric. It has the opposite sign of the complex structure in \cite{GZ}, determined by the condition that $\om(\cdot,J\cdot)$ (with $J$ acting in the \emph{second} argument) is a Riemannian metric. Floer's equation is given by
\[
\partial_s u + J(\partial_t u - X_{H_t}) = 0
\]
as in \cite{GG}. Floer's equation in \cite{GZ} looks the same, but, since the complex structures in \cite{GG} and \cite{GZ} have opposite signs, solutions $u(s,\cdot)$ of our Floer's equation correspond to solutions $u(-s,\cdot)$ of Floer's equation in \cite{GZ}.

\subsection{Symplectic homology}
\label{sec:SH}

Let $(M^{2n+1},\xi)$ be a closed contact manifold endowed with a strong symplectic filling given by a Liouville domain $W$ such that $c_1(TW)|_{H_2(W,\R)}=0$. The symplectic homology $\SH_*(W)$ of $W$ is a symplectic invariant introduced by Cieliebak, Floer, Hofer and Viterbo \cite{CFH,Vit}. It is obtained as a direct limit of Floer homologies, graded by the Conley-Zehnder index, of a suitable class of Hamiltonians on $W$ and a nice reference with full details can be found in \cite{BO09b}.

Symplectic homology has a natural action filtration which allows us to define the \emph{negative} and \emph{positive} symplectic homologies $\SH^-_*(W)$ and $\SH^+_*(W)$. These homologies are related via the tautological exact triangle
\begin{equation}
\label{eq:etsh}
\xymatrix{
		\SH^-_*(W) \ar[rr] & &\SH_*(W)\ar[ld] \\
		& \SH^+_*(W) \ar[lu]^{[-1]} &}.
\end{equation}

Symplectic homology has a decomposition given by the free homotopy classes $a$ of loops in $W$ and the previous exact triangle respects this decomposition. Denote by $\SH_*^a(W)$ and $\SH^{a,\pm}_*(W)$ the corresponding homologies. The negative symplectic homology is generated by constant periodic orbits and therefore $\SH_*^-(W) = \SH^{0,-}_*(W)$. It turns out that $\SH_*^-(W)$ is isomorphic to $\H^{(n+1)-*}(W)$.

The action functional of an autonomous Hamiltonian is invariant under the natural $S^1$-action given by translations in the parametrization of the closed curves. An equivariant version of symplectic homology, taking into account this symmetry, is the equivariant symplectic homology $\SH^{S^1}_*(W)$ introduced by Viterbo \cite{Vit} and developed by Bourgeois and Oancea \cite{BO10, BO13a, BO13b, BO17}. As established in \cite{BO13b}, the groups $\SH_{*}(W)$ and $\SH^{S^1}_*(W)$ fit into the Gysin exact triangle
\begin{equation}
\label{eq:et gysin}
\xymatrix{
		\SH_{*}(W)\ar[rr] & &\SH^{S^1}_*(W)\ar[ld]^D \\
		& \SH^{S^1}_{*-2}(W) \ar[lu]^{[1]} &}
\end{equation}
where $D$ is the so called \emph{shift operator} \cite{GG}.

As in the non-equivariant case, the equivariant symplectic homology has an action filtration that allows us to define the positive/negative equivariant symplectic homology $\SH^{S^1,\pm}_*(W)$. It turns out that the positive equivariant symplectic homology with rational coefficients can be obtained as the homology of a chain complex $\CC_*(\alpha)$ generated by the good closed Reeb orbits of a non-degenerate contact form $\alpha$ on $M$; cf. \cite[Proposition 3.3]{GG} and \cite[Lemma 2.1]{GU}. This complex is filtered by the action and graded by the Conley-Zehnder index. In this work, we will take the grading using trivializations of the contact structure along periodic orbits $\ga$ using sections of the determinant line bundle $\Lambda_\C^{n+1}TW$ following \cite{McL,Sei}. These trivializations behave well under iterations:  the trivialization induced on $\gamma^j$ coincides, up to homotopy, with the $j$-th iterate of the  trivialization over $\gamma$; see \cite[Section 2.3]{AM}. If $c_1(TW)$ does not vanish, this grading is, in general, fractional; cf. \cite[Section 2.3]{AM}. The differential in the complex $\CC_*(\alpha)$, but not its homology, depends on several auxiliary choices, and the nature of the differential is not essential for our purposes. The complex $\CC_*(\alpha)$ is functorial in $\alpha$ in the sense that a symplectic cobordism equipped with a suitable extra structure gives rise to a map of complexes that induce an isomorphism in the homology. For the sake of brevity and to emphasize the obvious analogy with contact homology, we denote the homology of $\CC_*(\alpha)$ by $\HC_*(W)$ rather than $\SH^{S^1,+}_*(W)$. Furthermore, once we fix a free homotopy class of loops in $W$, the part of $\CC_*(\alpha)$ generated by good closed Reeb orbits in that class is a subcomplex. As a consequence, the entire complex $\CC_*(\alpha)$ breaks down into a direct sum of such subcomplexes indexed by free homotopy classes of loops in $W$.

A remarkable observation by Bourgeois and Oancea in \cite[Section 4.1.2]{BO17} is that under suitable additional assumptions on the indices of closed Reeb orbits the positive equivariant symplectic homology is defined even when $M$ does not have a symplectic filling, using the symplectization of $M$, and therefore is a contact invariant. To be more precise, we assume that $c_1(\xi)|_{H_2(M,\R)}=0$ and that $M$ admits a non-degenerate contact form $\alpha$ such that all of its closed \emph{contractible} Reeb orbits have Conley--Zehnder index strictly greater than $3-n$. Furthermore, under this assumption once again the equivariant symplectic homology of $M$ with rational coefficients can be described as the homology of a complex $\CC_*(\alpha)$ generated by good closed Reeb orbits of $\alpha$, graded by the Conley-Zehnder index and filtered by the action. The complex breaks down into the direct sum of subcomplexes indexed by free homotopy classes of loops \emph{in $M$}. We will use the notation $\HC_*^a(M)$ to denote the homology of the complex generated by the orbits with free homotopy class $a$.

With the same assumption on $\alpha$, we can also define a \emph{non-equivariant} symplectic homology $\SH_*(M)$ in terms of the symplectization of $M$ and that therefore does not require the existence of a filling $W$ and is a contact invariant. The construction of this homology is analogous to the one in \cite[Section 4.1.2]{BO17}: the admissible Hamiltonians $H^c: (0,\infty) \times M \to \R$ are of the form $H^c(r,x)=h^c(r)$, with $h^c: (0,\infty) \to \R$ a convex increasing function such that $dh^c(r)/dr \to 0$, $r \to 0$, $dh^c(r)/dr = c$ for $r \gg 0$ and $d^2h^c(r)/dr^2>0$ whenever $dh^c(r)/dr < c$. (Here, of course, $H^c$ has to be slightly perturbed to a time-dependent Hamiltonian in order to achieve nondegeneracy of its 1-periodic orbits.) The condition on the indices of the orbits prevents the bubbling off of planes in the concave end, so that the resulting Floer homology is defined. Then the argument developed in \cite{BO13b} to establish the Gysin exact triangle \eqref{eq:et gysin} works verbatim to prove the exact triangle
\begin{equation}
\label{eq:et gysin symplectization}
\xymatrix{
		\SH_{*}(M)\ar[rr] & &\HC_*(M)\ar[ld]^D \\
		& \HC_{*-2}(M) \ar[lu]^{[1]} &}.
\end{equation}
These groups have a decomposition by the free homotopy classes in $M$ and this exact triangle respects this decomposition.

Finally, let us briefly recall the definition of equivariant local symplectic homology. Given an isolated (possibly degenerate) closed orbit $\ga$ of $\alpha$ we have its equivariant local symplectic homology $\HC_*(\ga)$ \cite{GG,HM}. It is supported in $[\cz(\ga),\cz(\ga)+\nu(\ga)]$ (i.e. $\HC_k(\ga)=0$ for every $k \notin [\cz(\ga),\cz(\ga)+\nu(\ga)]$), where $\cz(\ga)$ is the Conley-Zehnder index of $\ga$ and $\nu(\ga)$ is the nullity of $\ga$, i.e., the geometric multiplicity of the eigenvalue 1 of the linearized Poincar\'e map. (The index of a degenerate periodic orbit here is defined as the lower semicontinuous extension of the Conley-Zehnder index for non-degenerate periodic orbits; cf. \cite{GM}.) If $\alpha$ carries finitely many simple closed orbits with free homotopy class $a$ then if $\HC^a_k(M) \neq 0$ there exists a closed orbit $\ga$ with free homotopy class $a$ such that $\HC_k(\ga)\neq 0$; see \cite{HM}. In particular, this implies that $|\cz(\ga)-k|\leq 2n$.

\subsection{Lusternik-Schnirelmann theory}
\label{sec:LS}

In what follows, we will explain briefly the results from Lusternik-Schnirelmann theory in Floer homology necessary for this work. We refer to \cite{GG} for details.

Let $(M^{2n+1},\xi)$ be a closed contact manifold and $a$ a free homotopy class in $M$. Let $\alpha$ be a non-degenerate contact form on $M$ such that every contractible closed Reeb orbit has index strictly bigger than $3-n$. Given numbers $0<T_1<T_2\leq \infty$ we denote by $\HC^{a,(T_1,T_2)}_*(\alpha)$ the equivariant symplectic homology of $\alpha$ with free homotopy class $a$ and action window $(T_1,T_2)$. When $\alpha$ is degenerate and both $T_1$ and $T_2$ are not in the action spectrum of $\alpha$ we define $\HC^{a,(T_1,T_2)}_*(\alpha)$ as $\HC_*^{a,(T_1,T_2)}(\balpha)$ for some small non-degenerate perturbation $\balpha$ of $\alpha$. (Recall that the action spectrum of $\alpha$ is given by $\A(\alpha)=\{\int_\ga\alpha;\,\ga\ \text{is a closed Reeb orbit of}\ \alpha\}$.) Given $T \in (0,\infty]$ we denote by $\HC_*^{a,T}(\alpha)$ the filtered equivariant symplectic homology $\HC_*^{a,(\ep,T)}(\alpha)$ for some $\ep>0$ sufficiently small such that $\ep<\min\{T;\,T \in \A(\alpha)\}$.

Let $\alpha$ be a contact form on $M$ such that every closed orbit with homotopy class $a$ is isolated and $T \in (0,\infty]\setminus\A(\alpha)$. Given a non-trivial element $w \in \HC^{a,T}_k(\alpha)$ we have a spectral invariant given by
\[
c_w(\alpha)=\inf\{T' \in (0,T)\setminus\A(\alpha);\,w\in\text{Im}(i^{a,T'})\}
\]
where $i^{a,T'}: \HC_*^{a,T'}(\alpha) \to  \HC_*^{a,T}(\alpha)$ is the map induced in the homology by the inclusion of the complexes. It turns out that there exists a periodic orbit $\ga$ with action $c_w(\alpha)$ and free homotopy class $a$ such that $\HC_k(\ga)\neq 0$; cf. \cite[Corollary 3.9]{GG}. 

The shift operator $D: \HC^{a}_*(M) \to  \HC^{a}_{*-2}(M)$ in the exact triangle \eqref{eq:et gysin symplectization} satisfies the property
\begin{equation}
\label{eq:shift op}
c_w(\alpha) > c_{D(w)}(\alpha),
\end{equation}
see \cite[Theorem 1.1]{GG}. Suppose now that there exists $l_a \in \Q$ such that $\HC_{l_a+2k}^a(M) \neq 0$ and $D: \HC_{l_a+2k}^a(M) \to  \HC^{a}_{l_a+2k-2}(M)$ is an isomorphism for every $k \in \N_0$. Then it follows from \eqref{eq:shift op} and the previous discussion that there exists an injective map $\psi: \N_0 \to \PP_a$, where $\PP_a$ is the set of closed orbits of $\alpha$ with homotopy class $a$, such that $\ga_k:=\psi(k)$ satisfies $\HC_{l_a+2k-2}(\ga_k)\neq 0$. In particular, $|\cz(\ga_k)-(l_a+2k-2)|\leq 2n$. The map $\psi$ is called a \emph{carrier map}; see \cite{GG}.

\section{Symplectic homology/cohomology of orbifolds}
\label{sec:SHO}

Let $(M,\xi)$ be a closed \emph{smooth} contact manifold endowed with an exact \emph{orbifold} symplectic filling $W$. Gironella and Zhou introduced in \cite{GZ} a symplectic \emph{co}homology of $W$ that will be fundamental in this work. Before we discuss this, we have to recall the definition of the Chen-Ruan cohomology.

\subsection{Chen-Ruan cohomology}
\label{sec:CR}

In this section, we will give a brief definition of Chen-Ruan cohomology enough for our purposes. In particular, we will not address its product structure since it will not be necessary here. We refer to \cite{ALR} and \cite{CR} for details.

A \emph{Lie groupoid} $\G$ consists of a manifold of objects $G_0$ and a manifold of arrows $G_1$ with the following structure maps: the source and target maps $s$, $t: G_1 \to G_0$, unit map $u: G_0 \to G_1$, inverse map $i : G_1 \to G_1$ and composition map $m: G_1 \tensor[_s]{\times}{_t} G_1 \to G_1$. All these maps are required to be smooth and to obey the obvious properties: composition has to be associative and has to interact in the expected way with the unit and inverse maps. Moreover we require $s$, $t$ to be submersions; this is needed so that $G_1 \tensor[_s]{\times}{_t} G_1$ is a manifold.

A Lie groupoid $\G$ is said to be proper if the map $(s,t): G_1 \to G_0 \times G_0$ is proper. The Lie groupoid $\G$ is said to be \'etale if $s$ and $t$ are local diffeomorphisms; in this case we can define the dimension of $\G$ to be $\dim \G = \dim G_0 = \dim G_1$. An \emph{orbifold groupoid} is an \'etale and proper Lie groupoid. Associated to it we have an underlying topological space: its orbit space $|\G| = G_0/\simeq$ where $\simeq$ is the equivalence relation defined by $x \simeq y$ if and only if there is $g \in G_1$ such that $x=s(g)$ and $y=t(g)$.

A basic example is given by a \emph{global quotient orbifold}. Let $X$ be a manifold and $G$ a finite group acting on it. The global quotient $[X/G]$ is given by the Lie groupoid $\G = G \ltimes X$ with $G_0=X$, $G_1=G\times X$, $s(g,x)=x$ and $t(g,x)=g(x)$.

Let $\G$ be an orbifold groupoid and consider the commutative diagram
\begin{equation*}
\xymatrix{
S_\G \ar[r] \ar[d]^{\beta} & G_1 \ar[d]^{(s,t)} \\
G_0 \ar[r]^{\Delta} & G_0 \times G_0
}
\end{equation*}
where $\Delta$ is the diagonal map and $S_\G=\{g \in G_1;\,s(g) = t(g)\}$ is naturally equipped with the map $\beta: S_\G \to G_0,\ g \mapsto t(g) = s(g)$. Any $h \in G_1$ induces a diffeomorphism $\beta^{-1}(s(h)) \to \beta^{-1}(t(h))$ via the action by conjugation $h \cdot g = hgh^{-1}$ for $g \in \beta^{-1}(s(h))$. In particular, we can form the Lie groupoid $\G \ltimes S_\G$ with the manifold of objects given by $S_\G$ and manifold of arrows given by $G_1 \tensor[_s]{\times}{_\beta} S_\G = \{(h, g) \in G_1 \times S_\G;\,s(h) = \beta(g)\}$, with source map $s(h, g)=g$ and target map $t(h, g) = hgh^{-1}$. This groupoid $\G \ltimes S_\G$ is called the \emph{inertia grupoid} of $\G$ and denoted by $\Lambda\G$.

\begin{example}
\label{ex:C^n}
Let $G \subset \U(n)$ be a finite subgroup and consider the global quotient orbifold $\G=G\ltimes \C^n$ with $G_0=\C^n$, $G_1=G\times \C^n$, $s(g,x)=x$ and $t(g,x)=g(x)$. Then
\[
|\Lambda\G| = \{(x,(g)_{G_x});\, x \in \C^n\text{ and }g \in G_x\}
\]
where $G_x$ is the isotropy group of $x$ (i.e. the subgroup $\{g \in G;\,g(x)=x\}$) and $(g)_{G_x}$ denotes the conjugacy class of $g$ in $G_x$. If $G$ is abelian and acts freely in $\C^n\setminus\{0\}$ then clearly
\[
|\Lambda\G| \cong\C^n \sqcup \{g \in G;\,g\neq 1\}.
\]								
\end{example}

The ungraded Chen-Ruan cohomology of an orbifold $\G$ is the just the (ungraded) singular cohomology of $|\Lambda\G|$. In order to describe its grading, we have to introduce a decomposition of $\Lambda\G$ into connected components.

For this, we define an equivalence relation $\simeq$ on $S_{\G}$ as follows.  On each local uniformizer $G\ltimes U$ (a local uniformizer is a sort of local chart for orbifolds; see \cite[Definition 2.8]{GZ} for a precise definition), for $x,y\in U$, any two $g_1\in G_x\subset G$ and $g_2\in G_y\subset G$ are equivalent with respect to $\simeq$ if $g_1,g_2$ are conjugated in $G$.  More generally, $g_1\simeq g_2$ iff $g_1,g_2$ are connected by a sequence $\{h_1=g_1,h_2,\ldots,h_{n-1}, h_n=g_2\}\subset S_{\G}$, such that $h_i,h_{i+1}$ are equivalent in a local uniformizer. We denote by $(g)$ the equivalence class of $g\in S_{\G}$ under the just defined relation $\simeq$, and by $T$ the set $S_{\G}/\simeq$ of all such equivalence classes. Then $\Lambda\G$ can be decomposed as
\[
\Lambda\G  = \bigsqcup_{(g)\in T}  \G_{(g)},
\] 
where $\G_{(g)}$ is the $\G$-inertia groupoid on the $(g)$-component of $S_{\G}$. Notice in particular that $\G_{(1)}$ is naturally isomorphic to $\G$. The orbifolds $\G_{(g)}$, for each $(g) \in T$, $g\neq 1$, are called the \emph{twisted sectors} of $\G$ and $\G_{(1)}$ is called the \emph{untwisted sector}.

\begin{example}
If $G \subset \U(n)$ is an abelian subgroup that acts freely in $\C^n\setminus\{0\}$ and $\G$ is the orbifold given in Example \ref{ex:C^n} then clearly $|\Lambda\G_1| = \C^n$ and $|\Lambda\G_g| = \{0\}$ for every  $g \in G$ such that $g\neq 1$.			
\end{example}

Now, assume that $\G$ admits an almost complex structure, i.e., an almost complex structure $J$ on $G_0$ such that $s^*J=t^*J$. Let $g \in S_\G$ be an arrow with $s(g) = t(g) = x$. Choose an orbifold chart $(U,G_x,\phi)$ with $U$ embedded in $G_0$. Then $g \in G_x$ is a map $g: U \to U$. Its differential at $x$ is a map $(dg)_x: T_xG_0 \to T_xG_0$. The almost complex structure $J_x$ endows $T_xG_0$ with the structure of a complex vector space, hence identifying $T_xG_0$ with $\C^n$ where $2n=\dim\G$; the condition that $s^*J = t^*J$ implies that $(dg)_x$ preserves the almost complex structure $J_x$ on $T_xG_0$, so it can be regarded as a linear transformation of complex vector spaces or, equivalently, a matrix in $GL(n,\C)$. Since g has finite order, say $m \in \N$, the eigenvalues of $(dg)_x$, always regarded as a complex linear transformation, are of the form $e^{2\pi i\lambda_1},\dots,e^{2\pi i\lambda_n}$ where $\lambda_j \in \Q$ are such that $m\lambda_j \in \Z$ and $\lambda_j \in [0,1)$ (so that $\lambda_j \in \{0,1/m,2/m,\dots,(m-1)/m\}$). We then define a
map $\age: S_\G \to \Q$ by mapping $g \in S_\G$ to
\[
\age(g)=\sum_{j=1}^n \lambda_j.
\]
Since eigenvalues do not change under conjugation, it is invariant with respect to the $\G$-action on $S_\G$ and thus induces a map (still called $\age$) $\age: |\Lambda\G| \to \Q$. By continuity of eigenvalues, the map is continuous and since it takes values in $\Q$ it must be locally constant. Hence, it is constant in each of the (un)twisted sectors, so we denote by $\age{(g)}$ the value that it takes in the (un)twisted sector $|\G_{(g)}|$. In particular, it is clear that $\age{(1)} = 0$.

\begin{definition}
Let $R$ be a field of characteristic zero. The \emph{Chen-Ruan cohomology} of an orbifold $\G$ with coefficients in $R$ is the $\Q$-graded vector space
\[
\H^*_{CR}(\G;R) = \bigoplus_{(g) \in T} \H^{*-2\age{(g)}}(|\G_{(g)}|;R).
\]
\end{definition}

\begin{example}
If $G \subset \U(n)$ is a subgroup that acts freely in $\C^n\setminus\{0\}$ then clearly
\begin{equation}
\label{eq:crc-C^n}
\H^*_{CR}(\C^n/G;\Q) = \bigoplus_{(g) \in \text{Conj}(G)}\Q[-2\age(g)],
\end{equation}
where $-2\age(g)$ is the shift in the grading.
\end{example}

\subsection{Symplectic homology/cohomology of orbifold fillings}
\label{sec:sh-orb}

Let $(M^{2n+1},\xi)$ be a closed smooth contact manifold endowed with an exact orbifold symplectic filling $W$ (see \cite{GZ} for a definition of an exact orbifold filling). Suppose that the rational first Chern class $c_1^\Q(TW)$ vanishes. Gironella and Zhou introduced in \cite{GZ} a symplectic cohomology of $W$ that will play a fundamental role in this work. Differently from the grading in symplectic homology established in Section \ref{sec:SH} (given by plus the Conley-Zehnder index), the grading of $\SH^*(W)$ in \cite{GZ} is given by \emph{$n+1$ minus the Conley-Zehnder index} (note here that the dimension of the filling $W$ is $2n+2$ and in \cite{GZ} it has dimension $2n$).

The orbifold symplectic cohomology is important for us because when $M$ is a lens space $\L$ its natural symplectic filling $W=\C^{n+1}/\Z_p$ is an exact orbifold. Since the lens space is smooth, we can consider the symplectic homology $\SH_*(M)$ of its symplectization but it is defined only for special contact forms (more precisely, for contact forms such that every contractible periodic orbit $\ga$ satisfies $\cz(\ga)>3-n$). However, the orbifold symplectic cohomology $\SH^*(W)$ is defined for any contact form. Moreover, the computation of $\SH^*(W)$ is useful for the computation of the shift operator in the exact triangle \eqref{eq:et gysin symplectization} as we will see in the proof of Theorem \ref{thm:multiplicity}.

As proved in \cite[Theorem A]{GZ} the tautological exact triangle \eqref{eq:etsh} also holds for the symplectic cohomology of orbifolds, but now $\SH_-^*(W)$ is isomorphic to the Chen-Ruan cohomology $\H^*_{CR}(W)$, where from now on we are taking rational coefficients (note the choice of the grading $n+1$ minus the Conley-Zehnder index in $\SH^*(W)$). Therefore, we have the exact triangle
\begin{equation}
\label{eq:etsh-orbifold-GZ}
\xymatrix{
		\H^*_{CR}(W) \ar[rr] & &\SH^*(W)\ar[ld] \\
		& \SH^*_+(W) \ar[lu]^{[1]} &}.
\end{equation}

The symplectic cohomology of orbifolds is defined, as in the smooth case, as a limit of Floer cohomologies of a suitable class of Hamiltonians on $W$. The periodic orbits and Floer trajectories are orbifold maps $\ga: S^1 \to \W$ and $u: \R \times S^1 \to \W$ respectively, where $\W$ is the completion of $W$. Suppose, for instance, that $W$ is a global quotient orbifold $[X/G]$. Then the periodic orbits and Floer trajectories correspond to smooth maps $\ga: [0,1] \to X$ and $u: \R \times [0,1] \to X$ satisfying $g$-boundary conditions for $g \in G$; cf. \cite{Mor}.

Similarly, we can define the symplectic homology $\SH_*(W)$ of exact orbifold fillings $W$ as in \cite{GG}, using the sign conventions and the grading of \cite{GG}. It turns out that $\SH_*(W)$ and $\SH^*(W)$ are the same theory. More precisely,
\begin{equation}
\label{eq:homology x cohomology}
\SH^{(n+1)-*}(W) = \SH_*(W). 
\end{equation}
Indeed, the differential of Floer cohomology in \cite{GZ} from a periodic orbit $\ga_0$ to $\ga_1$ counts solutions $u$ of Floer's equation (with the convention of Floer's equation in \cite{GZ}) such that $\lim_{s\to\infty} u(s,\cdot)=\ga_0(\cdot)$ and $\lim_{s\to-\infty} u(s,\cdot)=\ga_1(\cdot)$. On the other hand, the differential in Floer homology in \cite{GG}  from a periodic orbit $\ga_0$ to $\ga_1$ counts solutions $u$ of Floer's equation (with the convention of Floer's equation in \cite{GG} which coincides with our convention) such that $\lim_{s\to-\infty} u(s,\cdot)=\ga_0(\cdot)$ and $\lim_{s\to\infty} u(s,\cdot)=\ga_1(\cdot)$. But, as explained in Section \ref{sec:signs}, the solutions $u(s,\cdot)$ of Floer's equation in \cite{GG} correspond to solutions $u(-s,\cdot)$ of Floer's equation in \cite{GZ}. Hence, the differentials coincide and consequently we conclude \eqref{eq:homology x cohomology}. Note that the Fredholm index of the linearized Floer operator in \cite{GG} is $\cz(\ga_0)-\cz(\ga_1)$ and therefore the differential in \cite{GG} decreases the degree. On the other hand, the Fredholm index in \cite{GZ} is $\cz(\ga_0)-\cz(\ga_1)=((n+1)-\cz(\ga_1))-((n+1)-\cz(\ga_0))$ and consequently the differential in \cite{GZ} increases the degree (given by $n+1$ minus the Conley-Zehnder index).

Thus, the previous exact triangle \eqref{eq:etsh-orbifold-GZ} is equivalent to the exact triangle
\begin{equation}
\label{eq:etsh-orbifold}
\xymatrix{
		\H^{(n+1)-*}_{CR}(W) \ar[rr] & &\SH_*(W)\ar[ld] \\
		& \SH_*^+(W) \ar[lu]^{[-1]} &}.
\end{equation}

A word is due to the left upper map. Given $k \in \Q$ this map sends $\SH_k^+(W)$ to $\H^{(n+1)-(k-1)}_{CR}(W)$ so it increases the degree in Chen-Ruan cohomology. The reason why is that, by equality \eqref{eq:homology x cohomology}, when we decrease the degree in symplectic homology we increase the degree in symplectic cohomology. More precisely, the exact triangle \eqref{eq:etsh-orbifold} is obtained in the following way: we have the tautological exact sequence
\[
\cdots \to \SH^+_k(W) \to \SH^-_{k-1}(W) \to \SH_{k-1}(W) \to \SH^+_{k-1}(W) \to \cdots
\]
where $\SH^-_*(W)$ is the negative symplectic homology. By \eqref{eq:homology x cohomology},
\[
\SH^-_{k-1}(W) = \SH^{(n+1)-(k-1)}_-(W) \cong \H_{CR}^{(n+1)-(k-1)}(W),
\]
where $\SH^*_-$ is the negative symplectic cohomology and the last isomorphism follows from \cite{GZ}. Therefore the last sequence yields
\[
\cdots \to \SH^+_k(W) \to \H_{CR}^{(n+1)-(k-1)}(W) \to \SH_{k-1}(W) \to \SH^+_{k-1}(W) \to \cdots
\]
which is the exact triangle \eqref{eq:etsh-orbifold}.

The symplectic homology groups have a decomposition given by the free homotopy classes of $W$ that correspond to the conjugacy classes of $\pi_1^{\text{orb}}(W)$. Consider the case that $W=\C^{n+1}/G$, where $G \subset \U(n)$ is a finite subgroup. Then $\pi_1^{\text{orb}}(W) \cong G$ and, by definition, the Chen-Ruan cohomology $\H^*_{CR}(W)$ has a decomposition indexed by the conjugacy classes of $G$. It is easy to see from the proof of \cite[Theorem A]{GZ} that the exact triangle \eqref{eq:etsh-orbifold} respects this decomposition.

\section{Proof of Theorem \ref{thm:existence}}
\label{sec:proof existence}

Let $\beta$ be the induced contact form on $\L$. First we need the following easy result. Note that given a closed Reeb orbit $\ga: S^1=\R/\Z \to M$ and $c \in S^1$ we have another periodic orbit given by $\ga(t+c)$. We refer to an equivalence class of parametrized closed orbits under reparametrization by $S^1$ as an \emph{unparametrized} closed orbit.

\begin{proposition}
\label{prop:symmetric orbit}
The simple symmetric unparametrized closed orbits of $\alpha$ are in bijection with the simple unparametrized closed orbits of $\beta$ whose homotopy classes are generators of $\pi_1(\L)$.
\end{proposition}

\begin{proof}
Let $A$ be the set of simple symmetric unparametrized closed orbits of $\alpha$ and $B$ the set of simple unparametrized closed orbits of $\beta$ whose homotopy classes are generators of $\pi_1(\L)$. Given $\ga \in A$ and $x \in \Im(\ga)$, note that the symmetry of $\ga$ implies that the $\Z_p$-orbit of $x$ is contained in $\Im(\ga)$. Take an isomorphism between $\pi_1(\L)$ and the group of deck transformations on $S^{2n+1}$ and given $a \in \pi_1(\L)$ let $\psi_a$ be the corresponding map.

Let $\pi: S^{2n+1} \to \L$ be the quotient projection. Given $\ga \in A$ define $\phi(\ga)$ as the simple unparametrized closed orbit whose image is $\pi \circ \ga$. We have that the homotopy class $[\phi(\ga)]$ is a generator of $\pi_1(\L)$ since, otherwise, the $\Z_p$-orbit of $x$ would not be contained in $\Im(\ga)$. Thus, $\phi$ defines a map $\phi: A \to B$.

We have that $\phi$ is injective: clearly, given two distinct $\ga_1,\ga_2 \in A$ we have that $\phi(\ga_1) \neq \phi(\ga_2)$. To show that $\phi$ is onto, we argue as follows: given $\tga \in B$ let $\tga_0: [0,T] \to \L$ be a parametrized simple closed orbit of $\beta$ such that $\tga_0(T)=\tga_0(0)$ and $\Im(\tga_0)=\Im(\tga)$. Let $\bga_0$ be a lift of $\tga_0$ to $S^{2n+1}$. Let $x=\bga_0(0)$ so that $\bga_0(T)=\psi_a(x)$ where $a=[\tga]$. Define the closed curve $\ga_0: [0,pT] \to S^{2n+1}$ as
\[
\ga_0(t)=\psi_a^k\circ\bga_0(t-kT)\text{ if } t \in [kT,(k+1)T]\text{ for }k \in \{0,\dots,p-1\}.
\]
Clearly, by the symmetry of $\alpha$ and the fact that $a$ is a generator of $\pi_1(\L)$, $\ga_0$ is a symmetric simple closed orbit of $\alpha$. Let $\ga$ be the corresponding unparametrized closed orbit. By construction, $\phi(\ga)=\tga$ and therefore we conclude that $\phi$ is surjective.
\end{proof}

Thus, in order to prove Theorem \ref{thm:existence}, we have to show that given any $a \in \pi_1(\L)$ there exists a closed orbit of $\beta$ with homotopy class $a$. Consider the discussion in Section \ref{sec:sh-orb}. Suppose that $M=\L$ and $W=\C^{n+1}/\Z_p$ where the $\Z_p$-action is the one generated by \eqref{eq:Z_p-action}. Note that $c_1^\Q(TW)=0$. Then, by the definition of the Chen-Ruan cohomology, $\H^*_{CR}(W)$ has a decomposition indexed by $(g)=g \in \Z_p$. Moreover, $\SH_*(W)$ and $\SH_*^+(W)$ also have decompositions indexed by $\pi_1^{\text{orb}}(W) \cong \Z_p$. Denote these decompositions by
\[
\H^*_{CR}(W) = \bigoplus_{k\in \Z_p} \H^*_{CR,k}(W),\ \SH_*(W) = \bigoplus_{k\in \Z_p} \SH_*^k(W)\text{ and }\SH_*^+(W) = \bigoplus_{k\in \Z_p} \SH_*^{k,+}(W).
\]
As mentioned in Section \ref{sec:sh-orb}, an inspection of the proof of \cite[Theorem A]{GZ} shows that the exact triangle \eqref{eq:etsh-orbifold} respects these decompositions.

Recall that we are taking all these homology groups with rational coefficients. Thus, by \cite[Theorem B]{GZ} and \eqref{eq:homology x cohomology}, $\SH_*(W)=0$ and therefore, by the exact triangle \eqref{eq:etsh-orbifold}, $\H^{(n+1)-(*-1)}_{CR}(W) \cong \SH_*^+(W)$ respecting the aforementioned decompositions (possibly up to a permutation in the indexes of the decomposition according to the choice of the isomorphism $\pi_1^{\text{orb}}(W) \cong \Z_p$). By \eqref{eq:crc-C^n}, given $k \in \{0,\dots,p-1\} \cong \Z_p$ we have that
\begin{equation}
\label{eq:CR filling}
\H^*_{CR,k}(W) =
\begin{cases}
\Q\quad\text{if }*=2\sum_{i=0}^{n+1} \{k\ell_i/p\}\\
0\quad\text{otherwise.}
\end{cases}
\end{equation}
Thus, $\SH_{*}^{k,+}(W)\neq 0$ for every $k$. By the definition of $\SH_*^{k,+}(W)$, it immediately implies that given any $a \in \pi_1(\L)$ we have that $\beta$ has a closed orbit with homotopy class $a$.

\section{Proof of Theorem \ref{thm:multiplicity}}
\label{sec:proof multiplicity}

Let $\beta$ be the induced contact form on $\L$. By Proposition \ref{prop:symmetric orbit}, we have to show that given any $a \in \pi_1(\L)$ there exist two distinct $a$-simple closed orbits of $\beta$ (recall that a periodic orbit is $a$-simple if it has homotopy class $a$ and it is not an iterate of another periodic orbit with homotopy class $a$, although it can be the iterate of a closed orbit with another homotopy class). Assume, from now on, that $\beta$ has finitely many $a$-simple periodic orbits  (otherwise, there is nothing to prove) and let $\{\bga_1,\dots,\bga_m\}$ be the set of $a$-simple closed orbits.  By Theorem \ref{thm:existence}, $m\geq 1$. We have to show that $m\geq 2$. As in the previous section, let $M=\L$ and $W=\C^{n+1}/\Z_p$ be its orbifold filling, where the $\Z_p$-action is the one generated by \eqref{eq:Z_p-action}. By \cite[Theorem B]{GZ} and \eqref{eq:homology x cohomology}, $\SH_*(W)=0$ (recall that we are taking rational coefficients) and therefore, by the exact triangle \eqref{eq:etsh-orbifold}, $\H^{(n+1)-(*-1)}_{CR}(W) \cong \SH_*^+(W)$. In particular, by \eqref{eq:CR filling}, we have that $\SH_k^+(W)=0$ for every $k>n+2$.

The following result is well known to experts in the area. For the sake of completeness, we will provide a sketch of the proof.

\begin{proposition}
\label{prop:iso}
Let $(M^{2n+1},\xi)$ be a smooth closed contact manifold endowed with an exact orbifold symplectic filling $W$ such that $c_1^\Q(TW)=0$. Suppose that $c_1(\xi)|_{H_2(M,\R)}=0$ and that $M$ admits a non-degenerate contact form $\alpha$ supporting $\xi$ such that every closed Reeb orbit $\ga$ contractible in $W$ satisfies $\cz(\ga)>3-n$. Then we have an isomorphism between $\SH_*(M)$ and $\SH^+_*(W)$ respecting the grading.
\end{proposition}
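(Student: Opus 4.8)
The plan is to compare two constructions of positive symplectic cohomology — one intrinsic to $M$ via its symplectization (as in \cite{BO17} and the discussion in Section~\ref{sec:SH}), the other via the orbifold filling $W$ (Gironella--Zhou \cite{GZ}) — and to show that both agree with the Floer cohomology of the admissible Hamiltonians $H^c$ on the completion $\W$ in the limit $c \to \infty$. First I would observe that the hypotheses are exactly what is needed for both theories to be defined: $c_1^\Q(TW)=0$ and $c_1(\xi)|_{H_2(M,\R)}=0$ provide a coherent $\Q$-grading, while the index condition $\cz(\ga)>3-n$ for orbits contractible in $W$ prevents the bubbling off of holomorphic planes in the concave end (for the symplectization construction one only needs this for orbits contractible in $M$, but orbits contractible in $M$ are a fortiori contractible in $W$, so the filling hypothesis is the stronger one and it suffices). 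This guarantees the compactness needed for the Floer differential on $\W$.

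Next I would set up the direct limit on the filling side. For the Hamiltonians $H^c$ on $\W$ that are zero on $W$ and linear of slope $c$ on the cylindrical end (perturbed to be time-dependent and nondegenerate), the Floer complex splits, up to the usual continuation-map filtration argument, into a part generated by orbits in the interior of $W$ — which in the limit computes $\H^*_{CR}(W)$, by \cite[Theorem~A]{GZ} together with the identification $\SH_-^*(W)\cong\H^*_{CR}(W)$ — and a part generated by nonconstant orbits near $\partial W = M$, which are in bijection with closed Reeb orbits of $\alpha$ of action less than $c$. Taking $c\to\infty$ and passing to the quotient by the constant part yields, by the tautological exact triangle \eqref{eq:etsh-orbifold}, precisely $\SH_+^*(W)$. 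On the other hand, the very same family $H^c$ restricted to the symplectization $(0,\infty)\times M$ is, by construction, the family of admissible Hamiltonians defining $\SH^*(M)$ in Section~\ref{sec:SH}; since the nonconstant $1$-periodic orbits and their Floer trajectories (for a suitable almost complex structure that is cylindrical near the ends and for which the interior of $W$ traps no index-low plane) are confined to a neighbourhood of the end, the positive part of the filling complex is literally the complex computing $\SH^*(M)$. Hence the two cohomologies are canonically isomorphic, and since both $H^c$-families are compatible with the radial coordinate, the isomorphism respects the $\Q$-grading by the determinant line bundle.

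More structurally, one can phrase this via the Gysin/tautological exact triangles: $\SH^*(M)$ fits into the triangle \eqref{eq:et gysin symplectization} with $\HC^*(M)$, and $\SH_+^*(W)$ is the cone (up to shift) of the map $\H^*_{CR}(W)\to\SH^*(W)$ in \eqref{eq:etsh-orbifold}; comparing the two via the continuation map $\SH^*(M)\to\SH_+^*(W)$ induced by the inclusion of the symplectization into $\W$, and invoking the five lemma with $\SH_-^*(W)\cong\H^*_{CR}(W)$ on one side, reduces everything to checking that this continuation map is a chain-level filtered quasi-isomorphism onto the positive part — which again is the confinement statement above. The main obstacle I expect is precisely this confinement/energy estimate: one must verify that for the Hamiltonians $H^c$ on the orbifold completion $\W$, no Floer trajectory asymptotic to a positive-action (Reeb) orbit can dip into the orbifold locus of $W^{\mathrm{int}}$, equivalently that the action filtration cleanly separates the constant orbits from the nonconstant ones and that the maximum principle applies across the neck despite the orbifold singularities in the interior. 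Granting the maximum principle of \cite{GZ} on the cylindrical end and the index obstruction to planes, this is routine, but it is the step where the orbifold nature of $W$ genuinely enters and must be handled with care.
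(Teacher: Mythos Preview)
You have correctly identified that the proof hinges on a confinement statement: Floer trajectories between nonconstant (Reeb-type) periodic orbits in the completion $\W$ must be shown to lie entirely in the symplectization part. However, the tools you invoke for this step do not do the job. The maximum principle on the cylindrical end bounds trajectories from \emph{above} (they cannot escape to $+\infty$); it says nothing about trajectories dipping \emph{down} into the interior of $W$. Likewise, the fact that the action filtration separates constant from nonconstant orbits only tells you that the positive complex is a quotient complex --- it does not prevent a differential Floer cylinder between two Reeb-type orbits from passing through the interior of the filling before returning to the collar. And ``the interior of $W$ traps no index-low plane'' is a statement about holomorphic planes, not about Floer cylinders for a fixed $J$; you need a mechanism to produce such a plane from a hypothetical deep-diving cylinder before the index obstruction can bite.

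That mechanism is a stretch-of-the-neck argument, which is what the paper actually uses (following \cite{BO09a,CFO}). One degenerates the almost complex structure along a hypersurface near $\partial W$; if for arbitrarily long necks some Floer trajectory still entered the filling, SFT compactness would yield in the limit a holomorphic building whose top level (in the symplectization) carries both Floer punctures together with at least one extra negative Reeb puncture, asymptotic to an orbit contractible in $W$. The index hypothesis $\cz(\ga)>3-n$ forces the SFT degree of that extra puncture to exceed $1$, making the Fredholm index of the top component negative and contradicting its regularity. The orbifold singularities play no role precisely because the neck region and the top component sit away from them. Your outline would become a proof once you replace ``maximum principle across the neck'' by this neck-stretching/SFT-compactness/Fredholm-index argument; as written, the confinement step is asserted but not established.
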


\begin{proof}
The proof goes as the proof of \cite[Proposition 9.17]{CO} (\cite[Proposition 9.17]{CO} is for smooth fillings but it works equally well for exact orbifold fillings). It is based on a stretch of the neck argument already used in \cite{BO09a,CFO} and the idea is the following; the reader can find details in \cite{CO}. Consider an admissible Hamiltonian $H: S^1 \times \W \to \R$, where $\W$ is the completion of $W$, used to define $\SH^+_*(W)$ (take, for instance, the type (II) of Hamiltonians considered in \cite[Section 3.1]{GZ}, where $H$ is $C^2$-small and autonomous on $W$ and $H$ depends only on the radius on $\W\setminus W$). Consider the Floer trajectories joining periodic Reeb orbits (which are located in the symplectization part $\W\setminus W$). Note that these are the Floer trajectories relevant in the definition of $\SH^+_*(W)$.

Now, degenerate the almost complex structure in stretch-of-the-neck manner near $\partial W$; see \cite{BO09a,CFO,CO}. We claim that, under the assumption on the indices, for a sufficiently stretched neck all Floer trajectories stay in the symplectization part and thus coincide with the Floer trajectories involved in the version of symplectic homology $\SH_*(M)$ defined using only the symplectization.

Arguing by contradiction, suppose that we would see in the limit a building whose top component, in the symplectization, is a curve which contains both punctures of the initial Floer trajectory and solves a Cauchy-Riemann equation with Hamiltonian perturbation. That this top component curve contains both Floer punctures is a consequence of an asymptotic behavior lemma developed in \cite[Pages 654-655]{BO09a} and \cite[Lemma 2.3]{CO}. By our hypothesis of contradiction, it must have at least one negative puncture to which is attached a holomorphic building which end up eventually in the filling; this implies that the orbit in this negative puncture must be contractible in $W$.

This top component is regular because it solves a perturbed equation near the Floer punctures. But it has a positive Floer puncture with SFT degree $k$ (the Symplectic Field Theory (SFT) degree is given by the Conley-Zehnder index plus $n-2$), a negative Floer puncture with SFT degree $k-1$ (these punctures are the punctures of the original Floer trajectory) and at least one other negative puncture with SFT degree strictly bigger than one by our index assumptions (note that the periodic orbit in this puncture must be contractible in $W$). Therefore, the Fredholm index of this top component curve, given by the SFT degree of the positive puncture minus the sum of the SFT degrees of the negative punctures, must be negative, contradicting the regularity. Such a curve therefore cannot exist. 

Finally, note here that, since the region where we stretch the neck is away from the singularities of $W$ as well as the top component, the singular part of $W$ does not play any role in this argument.
\end{proof}

Clearly, the lens space $M=\L$ with its filling $W=\C^{n+1}/\Z_p$ satisfies the assumptions of the previous proposition. (Note here that $\pi_1^{\text{orb}}(W) \cong \pi_1(M) \cong \Z_p$ and therefore a non-degenerate dynamically convex contact form on $M$ has the property that every periodic orbit $\ga$ contractible in $W$ satisfies $\cz(\ga)>3-n$.) Now, we need the following result which has independent interest.

\begin{proposition}
\label{prop:HC}
We have that
\begin{equation}
\label{eq:HC lens space}
\HC_*^{a}(M) =
\begin{cases}
\Q\quad\text{if }*=k_a + 2k\text{ for every }k \in \N_0\\
0\quad\text{otherwise,}
\end{cases}
\end{equation}
where $k_a=\min\{k \in \Q;\, \HC_k^a(M)\neq 0\}$. 
\end{proposition}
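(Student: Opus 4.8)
The plan is to compute $\SH^*_a(M)$ from the orbifold filling $W=\C^{n+1}/\Z_p$, to insert the answer into the Gysin exact triangle \eqref{eq:et gysin symplectization}, and then to run a diagram chase; the one genuinely non‑formal ingredient is that the closed orbits of class $a$ have positive mean index, which prevents the chase from creating nonzero groups in arbitrarily negative degrees. Concretely, I would first pin down $\SH^*_a(M)$. Since $\SH^*(W)=0$ by \cite[Theorem B]{GZ}, the orbifold tautological triangle \eqref{eq:etsh-orbifold}, which respects the splitting indexed by $\Z_p\cong\pi_1^{\text{orb}}(W)$, gives $\SH^*_{a,+}(W)\cong\H^*_{CR,a}(W)$; and by Proposition \ref{prop:iso}, whose isomorphism is induced by a neck stretch and hence preserves free homotopy classes, this is isomorphic to $\SH^*_a(M)$. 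By \eqref{eq:CR filling} the group $\H^*_{CR,a}(W)$ is one‑dimensional and concentrated in a single degree, so the same holds for $\SH^*_a(M)$; write $d_a$ for that degree (it is bounded above, consistent with the already recorded vanishing $\SH^k(M)=0$ for $k\geq 2n+2$).

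Next I would check that $\HC^*_a(M)$ is bounded below. As $\beta$ has only finitely many simple closed orbits of class $a$, say $\bga_1,\dots,\bga_m$, every closed Reeb orbit of class $a$ is an iterate $\bga_i^{1+kq}$, where $q$ is the order of $a$ in $\pi_1(M)\cong\Z_p$ and $k\in\N_0$. Since $\bga_i^{q}$ is contractible in $M$, hence in $W$, it lifts to a closed Reeb orbit on $S^{2n+1}$ to which dynamical convexity applies; that orbit has Conley--Zehnder index $\geq n+2$, hence positive mean index, and therefore so does $\bga_i$, so $\cz(\bga_i^{1+kq})\to+\infty$. Combined with the fact that $\HC^*(\ga)$ is supported in $[\cz(\ga),\cz(\ga)+\nu(\ga)]$ and that $\HC^k_a(M)\neq 0$ forces a closed orbit $\ga$ of class $a$ with $\HC^k(\ga)\neq 0$, this yields $\HC^k_a(M)=0$ for $k\ll 0$, with each $\HC^k_a(M)$ finite‑dimensional.

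Now the chase. Arguing as in the derivation that $D\colon\HC^k(M)\to\HC^{k+2}(M)$ is an isomorphism for $k\geq 2n+1$, but using the full computation of $\SH^*_a(M)$, the relevant segment of the long exact sequence of \eqref{eq:et gysin symplectization} in class $a$ reads
\[
\SH^{k+2}_a(M)\longrightarrow\HC^k_a(M)\xrightarrow{\ D\ }\HC^{k+2}_a(M)\longrightarrow\SH^{k+3}_a(M),
\]
so $D\colon\HC^k_a(M)\to\HC^{k+2}_a(M)$ is an isomorphism for every $k\notin\{d_a-3,d_a-2\}$, in particular for $|k|$ large. On any coset of $2\Z$ not meeting $\{d_a-3,d_a-2\}$, $D$ is then an isomorphism in every degree, so vanishing for $k\ll 0$ propagates to give $\HC^*_a(M)=0$ throughout; this disposes of all cosets of $2\Z$ except the two contained in $d_a+\Z$. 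Propagating the isomorphisms $D$ (valid for $k\leq d_a-4$) up from $k\ll 0$ gives $\HC^{d_a-2}_a(M)=\HC^{d_a-3}_a(M)=0$; then exactness of $0=\HC^{d_a-3}_a(M)\to\HC^{d_a-1}_a(M)\to\SH^{d_a}_a(M)\to\HC^{d_a-2}_a(M)=0$ forces $\HC^{d_a-1}_a(M)\cong\SH^{d_a}_a(M)\cong\Q$, and exactness of $\HC^{d_a-2}_a(M)=0\to\HC^{d_a}_a(M)\to\SH^{d_a+1}_a(M)=0$ forces $\HC^{d_a}_a(M)=0$; the isomorphisms $D$ for $k\geq d_a-1$ then carry these values upward. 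Hence $\HC^*_a(M)=\Q$ exactly for $*\in\{\,d_a-1+2k:k\in\N_0\,\}$ and $0$ otherwise, so $k_a=d_a-1$ and \eqref{eq:HC lens space} holds.

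I expect the main obstacle to be the boundedness below in the second step, i.e.\ that the Conley--Zehnder indices of closed orbits of class $a$ do not accumulate at $-\infty$: without it the Gysin sequence would be consistent with an infinite tower of nonzero groups in negative degrees, and this is the only point where dynamical convexity is used in an essential way, via the passage to the cover $S^{2n+1}$ (where an iterate of each orbit of class $a$ becomes a contractible closed Reeb orbit to which dynamical convexity directly applies) and a comparison of mean indices downstairs and upstairs. The remaining steps are routine manipulations of the Gysin sequence.
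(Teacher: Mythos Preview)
Your proof is correct, but it follows a genuinely different route from the paper's. The paper argues by explicit toric contact geometry: it writes down a moment cone for $M=L^{2n+1}_p(\ell_0,\dots,\ell_n)$, chooses a non-degenerate toric Reeb vector field $R\approx\nu_n$ so that, up to arbitrarily large degree, the complex $\CC^*(\alpha)$ is generated by a single simple orbit $\gamma$ and its iterates, and then computes $\mu_{CZ}(\gamma^N)$ directly using the index formula of \cite{AMM-Q}. The key identity $\mu_{CZ}(\gamma^{N+p})=\mu_{CZ}(\gamma^N)+2$ forces the differential to vanish and yields the stated pattern.

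Your argument instead extracts the answer from the Gysin triangle once $\SH^*_a(M)$ has been pinned down (via Proposition~\ref{prop:iso}, \cite[Theorem B]{GZ}, and \eqref{eq:CR filling}) together with a lower bound on degrees coming from dynamical convexity. This is more homological and entirely avoids the toric machinery; it reuses only tools already set up in Sections~\ref{sec:SHO}--\ref{sec:proof existence}. Two points are worth flagging. First, your proof as written uses the standing hypotheses of Section~\ref{sec:proof multiplicity} (dynamical convexity of $\alpha$ and finitely many simple orbits of class $a$), whereas the paper's toric proof is unconditional and justifies the ``independent interest'' remark; of course, since $\HC^*_a(M)$ is a contact invariant, you could simply run your argument for an irrational ellipsoid and recover the general statement. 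Second, you are using that the isomorphism of Proposition~\ref{prop:iso} respects the decomposition by free homotopy classes; this is true for the reason you give (the neck--stretch keeps the relevant trajectories in the symplectization and $\pi_1(M)\cong\pi_1^{\mathrm{orb}}(W)$), but it is slightly stronger than what the paper states explicitly. What you gain is brevity and independence from \cite{AMM-Q}; what the paper's proof gains is an explicit formula for $\mu_{CZ}(\gamma^N)$ and hence for $k_a$, and a result that does not rely on any index hypothesis on the contact form.
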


\begin{remark}
The grade $k_a$ is finite and can be easily computed from the weights of the action and $p$; see \cite[Equation (1.2)]{AM}. Note that for our global quotient orbifold $W$ we can naturally identify $S_\G/\simeq$, where $x \simeq y$ if and only if $x$ and $y$ belong to the same connected component of $S_\G$, with $\pi_1^{\text{orb}}(W) \cong \pi_1(M)$ and consequently we can see the age map, defined in Section \ref{sec:CR}, as a function $\age: \pi_1(M) \to \Q$. Using \cite[Equation (1.2)]{AM} we can easily derive the following computation of $k_a$ in terms of the age:
\begin{equation*}
k_a=
\begin{cases}
2\,\age(a)-n\quad\text{if }a\neq 0\\
n+2\quad\text{otherwise.}
\end{cases}
\end{equation*}
\end{remark}

\begin{proof}
As mentioned in Section \ref{sec:SH}, $\HC_\ast (M)$ can be described as the homology of a complex $\CC_\ast (\alpha)$ generated by good closed Reeb orbits, for some non-degenerate contact form $\alpha$, graded by the Conley-Zehnder index. For lens spaces, which are very special  toric contact manifolds, we can use toric contact geometry to explicitly write down non-degenerate toric Reeb vector fields with finitely many simple closed Reeb orbits and explicitly compute the Conley-Zehnder indices of all of them and of all of their iterates. For lens spaces with zero first Chern class, i.e. Gorenstein contact lens spaces, this has been done with all details in Section 5 of \cite{AMM-MRL} with the following outcome:
\begin{itemize}
\item With an appropriate choice of toric contact form $\alpha$ and up to an arbitrarily large degree, $\CC_\ast (M)$ is generated by a single good simple closed Reeb orbit $\gamma$ and its iterates.
\item $\gamma$ is a generator of the fundamental group $\pi_1 (M)$ and 
\[
\cz \left( \gamma^{N+p} \right) = \cz \left( \gamma^{N} \right) + 2 
\quad \text{up to an arbitrarily large $N\in\N$ ($p=|\pi_1 (M)|$).}
\]
\item It follows that $\HC_\ast^a (M)$ satisfies the result stated in this proposition for any free homotopy class $a$.
\end{itemize}
Using the Conley-Zehnder index formula obtained in Section 4 of~\cite{AMM-Q} and valid in the context of $\Q$-Gorenstein toric contact manifolds, i.e. toric contact manifolds with torsion first Chern class, the same approach can be adapted to general lens spaces as we will now explain.

Let $M = L^{2n+1}_p (\ell_0, \ldots, \ell_n)$ be an arbitrary lens space and assume, without any loss of generality, that $\ell_n = 1$. As a toric contact manifold, $M$ is determined by a moment cone $C\subset\R^{n+1}$ whose facets have defining normals of the form
\[
\nu_0=\begin{bmatrix}
0\\ 
\vdots\\ 
0\\ 
k\\ 
m
\end{bmatrix},\,\,
 \nu_j=\begin{bmatrix}
\ \\ 
e_j\\ 
\ \\ 
0 \\
m
\end{bmatrix}\textup{ for }j=1, \ldots, n-1,\,\, \nu_n=\begin{bmatrix}
-\ell_1\\ 
\vdots\\ 
-\ell_{n-1}\\ 
q\\ 
m
\end{bmatrix}\,,
\]
where
\begin{itemize}
\item $\left\{e_1, \ldots, e_{n-1}  \right\}$ is the canonical basis of $\Z^{n-1}$;
\item $m\in\left\{ 1, \ldots, p-1 \right\}$ is the minimal positive integer such that
\[
m \cdot c_1 (TM) = 0 \in H^2 (M; \Z) \cong \Z_p \quad\text{(note that $m$ divides $p$);}
\]
\item $k\in\left\{ 0, \ldots, p-1 \right\}$ is the minimal non-negative integer such that
\[
k \cdot \left( \ell_0 + \ell_1 + \cdots + \ell_n  \right) \equiv  \frac{p}{m} \mod p
\]
(note that $c_1(M) \equiv \ell_0 + \ell_1 + \cdots + \ell_n \mod p$);
\item $q = k \cdot \left( \ell_1 + \cdots + \ell_n \right) - p/m$.
\end{itemize}
In fact, the matrix $\beta = [\nu_0 | \nu_1 | \cdots | \nu_n ]$ is such that
\begin{align*}
\det (\beta) & = \pm \left(km\left(\ell_1 +\cdots+\ell_{n-1}+1\right) - mq\right)  \\
& = \pm m \left(k \left( \ell_1 + \cdots + \ell_n \right) - k \left( \ell_1 + \cdots + \ell_n \right) + \frac{p}{m}\right) \\
& = \pm p
\end{align*}
and
\[
\ker \left(\beta:\T^{n+1} \to \T^{n+1}  \right)
\]
is generated by
\[
\left( -\frac{q}{p} , \frac{k\ell_1}{p}, \ldots, \frac{k\ell_{n-1}}{p}, \frac{k}{p}  \right)\,.
\]
Hence, the toric contact manifold determined by this moment cone $C\subset \R^{n+1}$ is
\[
L^{2n+1}_p (-q, k\ell_1, \ldots, k\ell_{n-1}, k).
\]
Since
\begin{align*}
-q & = -k \left( \ell_1 + \cdots + \ell_n \right) + \frac{p}{m} \\
& \equiv -k \left( \ell_1 + \cdots + \ell_n \right) + k \left( \ell_0 + \ell_1 + \cdots + \ell_n \right) \mod p \\
& = k \ell_0
\end{align*}
we have that
\begin{align*}
L^{2n+1}_p (-q, k\ell_1, \ldots, k\ell_{n-1}, k) & = L^{2n+1}_p (k \ell_0, k\ell_1, \ldots, k\ell_{n-1}, k) \\
& = L^{2n+1}_p ( \ell_0, \ell_1, \ldots, \ell_{n-1}, 1) \\
& = M \,.
\end{align*}

Any positive linear combination of the normals $\nu_0, \nu_1, \ldots, \nu_n$, gives rise to a toric Reeb vector field on $M$. In particular, we can consider toric Reeb vector fields of the form
\[
R = \sum_{j=0}^{n-1} \veps_j \nu_j  + \left( 1 - \veps \right) \nu_n
\]
for arbitrarily small $\veps_0, \ldots, \veps_{n-1} > 0$ and $\veps = \sum_{j=0}^{n-1} \veps_j$. By choosing $\veps_0, \ldots, \veps_{n-1}$ such that $\{ \veps_0, \ldots, \veps_{n-1} , 1\}$ are $\Q$-independent, we know that the Reeb flow of $R$ has exactly $n+1$ simple closed orbits, one for each edge of the moment cone $C\subset\R^{n+1}$, and these are all non-degenerate and generators of the fundamental group $\pi_1 (M)$ (see \cite[Section 2.2]{AMM-MRL}). Moreover, since $R\approx \nu_n$, the $n$ orbits corresponding to the $n$ edges contained in the facet of $C$ with normal $\nu_n$, and all their iterates, have an arbitrarily large Conley-Zehnder index (for arbitrarily small $\veps_0, \ldots, \veps_{n-1} > 0$). Hence, $\HC_*(M)$ can be completely determined by computing
\[
\cz \left( \gamma^N \right) \ \text{for all $N\in\N$ and arbitrarily small $\veps_0, \ldots, \veps_{n-1} > 0$,}
\]
where $\gamma$ is the simple closed $R$-orbit corresponding to the edge of $C$ determined by the $n$ facets with normals $\nu_0, \ldots, \nu_{n-1}$. 

This Conley-Zehnder index computation can be done in the following way:
\begin{itemize}
\item[(i)] Complete $\{\nu_0, \ldots, \nu_{n-1} \}$ to a $\Z$-basis $\{\nu_0, \ldots, \nu_{n-1}, \eta \}$ of $\Z^{n+1}$ with the vector
\[
\eta = (0, \ldots, 0, c, d)^t\,, \ \text{where $c,d\in\Z$ are such that $kd - mc = 1$.}
\]
A simple computation shows that
\[
\nu_n = a_0 \nu_0 + \sum_{j=1}^{n-1} (-\ell_j) \nu_j  + p \eta \,,\ \text{with}\ 
a_0 =  \sum_{j=1}^{n} \ell_j  - d \frac{p}{m}\,.
\]
\item[(ii)] Write $R$ in the basis $\{\nu_0, \ldots, \nu_{n-1}, \eta \}$:
\begin{align*}
R & =  \sum_{j=0}^{n-1} \veps_j \nu_j  + \left( 1 - \veps \right) \nu_n \\
& = \sum_{j=0}^{n-1} \veps_j \nu_j  + 
\left( 1 - \veps \right) \left( a_0 \nu_0 + \sum_{j=1}^{n-1} (-\ell_j) \nu_j  + p \eta \right) \\
& = (\veps_0 + (1-\veps) a_0) \nu_0 + \sum_{j=1}^{n-1} (\veps_j -(1-\veps)\ell_j) \nu_j 
+ (1-\veps) p \eta \,.
\end{align*}
\item[(iii)] Use the Conley-Zehnder index formula obtained in Section 4 of~\cite{AMM-Q} to find that
\begin{equation} \label{eq:CZindex}
\cz \left( \gamma^N \right) = 2 \left( \left\lfloor \frac{N}{p} \left(a_0 + \frac{\veps_0}{1-\veps}    \right)\right\rfloor 
+ \sum_{j=1}^{n-1}  \left\lfloor \frac{N}{p} \left(-\ell_j + \frac{\veps_j}{1-\veps}    \right)\right\rfloor  
+ N \frac{d}{m} \right) + n \,.
\end{equation}
\end{itemize}

We can now complete the proof of the proposition in the following way. Let $D\in\N$ be an arbitrarily high degree. It follows from~(\ref{eq:CZindex}) that
\[
\cz \left( \gamma^N \right) \geq \frac{2N}{p} -n\,, \ \forall N\in\N\,,
\]
and so
\[
\cz \left( \gamma^N \right) > D \,, \ \forall N > \frac{p(D+n)}{2}\,.
\]
Choose $\veps_0, \ldots, \veps_{n-1} > 0$ sufficiently small so that:
\begin{itemize}
\item[(i)] the $n$ orbits corresponding to the $n$ edges contained in the facet of C with normal $\nu_n$, and all their iterates, have Conley-Zehnder index greater than $D$;
\item[(ii)] for all $N\leq p(D+n)/2$ we have that
\begin{align*}
\cz \left( \gamma^N \right) & = 2 \left( \left\lfloor \frac{N}{p} \left(a_0 + \frac{\veps_0}{1-\veps}    \right)\right\rfloor 
+ \sum_{j=1}^{n-1}  \left\lfloor \frac{N}{p} \left(-\ell_j + \frac{\veps_j}{1-\veps}    \right)\right\rfloor  
+ N \frac{d}{m} \right) + n \\
& = 2 \left( \left\lfloor \frac{N a_0}{p}\right\rfloor 
+ \sum_{j=1}^{n-1}   \left\lfloor \frac{-N \ell_j}{p}\right\rfloor  
+ N \frac{d}{m} \right) + n\,.
\end{align*}
\end{itemize}
We can now finish the proof of this proposition by showing that
\[
\cz \left(\gamma^{N+p}\right) = \cz \left(\gamma^{N}\right) + 2 \quad \text{for all} \  N\leq \frac{p(D+n)}{2} - p\,.
\]
In fact,
\begin{align*}
\cz \left(\gamma^{N+p}\right) & = 2 \left( \left\lfloor \frac{N a_0}{p} + a_0\right\rfloor 
+ \sum_{j=1}^{n-1}   \left\lfloor \frac{-N \ell_j}{p} - \ell_j \right\rfloor  
+ N \frac{d}{m} + p \frac{d}{m} \right) + n \\
& = \cz \left( \gamma^N \right)  + 2 \left( a_0 + \sum_{j=1}^{n-1} (-\ell_j) + \frac{pd}{m}\right) \\
& = \cz \left( \gamma^N \right)  + 2 \left( \sum_{j=1}^{n} \ell_j - \frac{dp}{m} + \sum_{j=1}^{n-1} (-\ell_j) + \frac{pd}{m}\right) \\
& = \cz \left( \gamma^N \right) + 2 (\ell_n) \\
& = \cz \left( \gamma^N \right) + 2 \quad \text{for all} \  N\leq \frac{p(D+n)}{2} - p\,.
\end{align*}
\end{proof}

From the previous two propositions we conclude the following result.

\begin{proposition}
\label{prop:shift operator}
Let $a \in \pi_1(M)$ and set $l_a=\min\{k_a + 2k > n+2;\,k\in \N_0\}$, with $k_a$ as in the last proposition. Then, for every $k\in \N_0$, $\HC^a_{l_a+2k}(M)=\Q$ and $D: \HC_{l_a+2k}^a(M) \to \HC_{l_a+2k-2}^a(M)$ is an isomorphism.
\end{proposition}

\begin{proof}
The fact that $\HC^a_{l_a+2k}(M)=\Q$ for every $k\in \N_0$ follows from Proposition \ref{prop:HC}. By Proposition \ref{prop:iso}, we have that $\SH_*(M) \cong \SH_*^+(W)$ and, therefore, $\SH_k(M)=0$ for every $k>n+2$, because, as explained at the end of the previous section, $\SH_*^+(W) \cong \H^{(n+1)-(*-1)}_{CR}(W)$ and $\H^k_{CR}(W)=0$ for every $k<0$. Thus, by the exact triangle \eqref{eq:et gysin symplectization}, we have that $D: \HC_{k}(M) \to \HC_{k-2}(M)$ is an isomorphism for every $k>n+2$. As discussed before, $D$ respects the decomposition by homotopy classes and consequently $D: \HC_k^a(M) \to \HC_{k-2}^a(M)$ is an isomorphism for every $k>n+2$. Hence, by the definition of $l_a$, we conclude that $D: \HC_{l_a+2k}^a(M) \to \HC_{l_a+2k-2}^a(M)$ is an isomorphism for every $k\in \N_0$.
\end{proof}

From the previous proposition and the discussion in Section \ref{sec:LS}, we have an injective map $\psi: \N_0 \to \PP_a$, where $\PP_a$ is the set of closed Reeb orbits with homotopy class $a$, such that $\ga_k:=\psi(k)$ satisfies
\begin{equation}
\label{eq:index}
|\cz(\ga_k)-(l_a+2k-2)|\leq 2n.
\end{equation}
(Note here that the periodic orbits with homotopy class $a$ are isolated since we are assuming that there exist finitely many $a$-simple orbits.) Consider the sequence of numbers
\[
x(k)=\cz(\ga_k).
\]
By \eqref{eq:index}, its density $\delta(x):=\lim_{j\to\infty}\frac{1}{j}\#\{i; x(i) \leq j\}$ equals $1/2$.

Recall that $\{\bga_1,\dots,\bga_m\}$ is the set of $a$-simple periodic orbits of the contact form $\beta$. Consider the sequences of numbers
\[
\bx_j(i) = \cz(\bga_j^{ip+1})
\]
for $j \in \{1,\dots,m\}$ and $i \in \N_0$. Since $|\cz(\bga_j^{ip+1})-(ip+1)\Delta(\bga_j)|\leq n$ for every $i \in \N_0$, we have that $\delta(\bx_j)=1/(p\Delta(\bga_j))$, where $\Delta(\bga_j)=\lim_{k\to\infty} \frac 1k \cz(\bga_j^k)$ is the mean index of $\bga_j$.

Now, note that each point in the sequence $x$ belongs to one of the sequences $\bx_j$ and no point in the sequences $\bx_j$ can be used twice, because $\psi$ is injective. So the density of $x$ must be no more than the sum of the densities of $\bx_j$:
\[
\delta(x) \leq \sum_{j=1}^m \delta(\bx_j)\quad\Longleftrightarrow\quad \frac 12 \leq \sum_{j=1}^m \frac{1}{p\Delta(\bga_j)},
\]
that is,
\begin{equation}
\label{eq:final ineq}
\frac p2 \leq \sum_{j=1}^m \frac{1}{\Delta(\bga_j)}.
\end{equation}
Now, suppose that $m=1$, that is, there exists only one $a$-simple closed orbit $\ga$. Since $\alpha$ is dynamically convex, $\Delta(\ga^p)>2$ (see \cite[Corollary 5.1]{Lon00}) which implies that $\Delta(\ga)>2/p$. Thus,
\[
\frac{1}{\Delta(\ga)} < \frac p2.
\]
But, by \eqref{eq:final ineq},
\[
\frac{1}{\Delta(\ga)} \geq \frac p2,
\]
a contradiction.

\begin{remark}
The assumption of dynamical convexity of $\alpha$ is used to ensure that $\Delta(\ga^p)>2$ and that every contractible periodic orbit of $\beta$ has index bigger than $3-n$ so that we can define the equivariant symplectic homology of $\beta$ in the symplectization of $\L$.
\end{remark}


\begin{thebibliography}{aaa}

\bibitem{AM} M. Abreu, L. Macarini, {\em Dynamical implications of convexity beyond dynamical convexity.} Calc. Var. Partial Differential Equations {\bf 61} (2022), no. 3, Paper No. 116, 47 pp.

\bibitem{AMM-MRL} M.~Abreu, L.~Macarini and M~Moreira,  {\em On contact invariants of non-simply connected Gorenstein toric contact manifolds.}  Mathematical Research Letters {\bf 29} (2022), 1--42.

\bibitem{AMM-Q} M.~Abreu, L.~Macarini and M~Moreira, {\em Contact invariants of $\Q$-Gorenstein toric contact manifolds, the Ehrhart polynomial and Chen-Ruan cohomology.} Adv. Math. {\bf 429} (2023), Paper No. 109180, 50 pp.

\bibitem{ALR} A. Adem, J. Leida, Y. Ruan, Orbifolds and stringy topology, volume 171 of Cambridge Tracts in Mathematics. Cambridge University Press, Cambridge, 2007

\bibitem{Bah} Y. B\"ahni, {\em First Steps in Twisted Rabinowitz-Floer Homology.} J. Symplectic Geom. {\bf 21} (2023), no. 1, 111--158.

\bibitem{BO09a} F. Bourgeois, A. Oancea, {\em An exact sequence for contact- and symplectic homology.} Invent. Math. {\bf 175} (2009), no. 3, 611--680.

\bibitem{BO09b} F. Bourgeois, A. Oancea, {\em Symplectic homology, autonomous Hamiltonians, and Morse-Bott moduli spaces.} Duke Math. J. {\bf 146} (2009), 71--174.

\bibitem{BO10} F. Bourgeois, A. Oancea, {\em Fredholm theory and transversality for the parametrized and for the $S^1$-invariant symplectic action.} J. Eur. Math. Soc. (JEMS) {\bf 12} (2010), no. 5, 1181--1229.

\bibitem{BO13a} F. Bourgeois, A. Oancea, {\em The index of Floer moduli problems for parametrized action functionals.} Geom. Dedicata {\bf 165} (2013), 5--24.

\bibitem{BO13b} F. Bourgeois, A. Oancea, {\em The Gysin exact sequence for $S^1$-equivariant symplectic homology.} J. Topol. Anal. {\bf 5} (2013), no. 4, 361--407.

\bibitem{BO17} F. Bourgeois, A. Oancea, {\em $S^1$-equivariant symplectic homology and linearized contact homology.} Int. Math. Res. Not. IMRN 2017, no. 13, 3849--3937. 

\bibitem{CE1} J. Chaidez, O. Edtmair, {\em 3D convex contact forms and the Ruelle invariant.} Invent. Math. {\bf 229} (2022), no. 1, 243--301.

\bibitem{CE2} J. Chaidez, O. Edtmair, {\em The Ruelle Invariant And Convexity In Higher Dimensions.} Preprint (2022), arXiv:2205.00935.

\bibitem{CR} W. Chen, Y. Ruan, {\em A new cohomology theory of orbifold.} Comm. Math. Phys. {\bf 248} (2004), 1--31.

\bibitem{CFH} K. Cieliebak, A. Floer, and H. Hofer, {\em Symplectic homology. II. A general construction.} Math. Z. {\bf 218} (1995), 103--122.

\bibitem{CFO} K. Cieliebak, U. Frauenfelder, and A. Oancea, {\em Rabinowitz Floer homology and symplectic homology.} Ann. Sci. \'Ec. Norm. Sup\'er. (4) {\bf 43} (2010), no. 6, 957--1015. 

\bibitem{CO} K. Cieliebak, A. Oancea, {\em Symplectic homology and the Eilenberg-Steenrod axioms.} Appendix written jointly with Peter Albers. Algebr. Geom. Topol. {\bf 18} (2018), no. 4, 1953--2130.

\bibitem{CGH}  D. Cristofaro-Gardiner, M. Hutchings, {\em From one Reeb orbit to two.} J. Differential Geom. {\bf 102} (2016), no. 1, 25--36.

\bibitem{DL} H. Duan, H. Liu, {\em Multiplicity and ellipticity of closed characteristics on compact star-shaped hypersurfaces in $\R^{2n}$.}  Calc. Var. Partial Differential Equations {\bf 56} (2017), no. 3, Art. 65, 30 pp. 

\bibitem{EH} I. Ekeland, H. Hofer, {\em Convex Hamiltonian energy surfaces and their periodic trajectories.} Comm. Math. Phys. {\bf 113} (1987), no. 3, 419--469. 

\bibitem{GG} V. Ginzburg, B. G\"urel, {\em Lusternik--Schnirelmann theory and closed Reeb orbits.}  Math. Z. {\bf 295} (2020), 515--582. 

\bibitem{GHHM} V. Ginzburg, D. Hein, U. Hryniewicz, L. Macarini, {\em Closed Reeb orbits on the sphere and symplectically degenerate maxima.} Acta Math. Vietnam. {\bf 38} (2013), no. 1, 55-78.

\bibitem{GM} V. Ginzburg, L. Macarini,  \emph{Dynamical convexity and closed orbits on symmetric spheres.} Duke Math. J. {\bf 170} (2021), 1201--1250.

\bibitem{Gi} M. Girardi, {\em Multiple orbits for Hamiltonian systems on starshaped surfaces with symmetries.} Ann. Inst. H. Poincar\'e Anal. Non Lin\'eaire {\bf 1} (1984), no. 4, 285--294.

\bibitem{GZ} F. Gironella, Z. Zhou, {\em Exact orbifold fillings of contact manifolds.} Preprint (2021), arXiv:2108.12247.
 
 \bibitem{GU} J. Gutt, M. Usher, {\em Symplectically knotted codimension-zero embeddings of domains in $\R^4$.} Duke Math. J. {\bf 168} (2019), no.12, 2299--2363.

\bibitem{HWZ} H. Hofer, K. Wysocki and E. Zehnder. \textit{The dynamics of strictly convex energy surfaces in $\R^4$.} Ann. of Math. {\bf 148} (1998), 197--289.
  
\bibitem{HM} U. Hryniewicz, L. Macarini, {\em Local contact homology and applications.} J. Topol.\ Anal.\ {\bf 7} (2015), 167--238.

\bibitem{LL} H. Liu, Y. Long, {\em The existence of two closed characteristics on every compact star-shaped hypersurface in $\R^4$.} Acta Math. Sin. (Engl. Ser.) {\bf 32} (2016), no. 1, 40--53.

\bibitem{LiZ} H. Liu, L. Zhang, {\em Multiplicity of closed Reeb orbits on dynamically convex $\RP^{2n-1}$ for $n\geq 2$.} Discrete Contin. Dyn. Syst. {\bf 42} (2022), no. 4, 1801--1816. 

\bibitem{Lon00} Y. Long, {\em Precise iteration formulae of the Maslov-type index theory and ellipticity of closed characteristics.} Adv. Math. {\bf 154} (2000), no. 1, 76--131. 

\bibitem{LZ} Y. Long, C. Zhu, {\em Closed characteristics on compact convex hypersurfaces in $\R^{2n}$.} Ann. of Math. {\bf 155} (2002), no. 2, 317--368.

\bibitem{McL} M. McLean, {\em Reeb orbits and the minimal discrepancy of an isolated singularity.} Invent. Math. {\bf 204} (2016), no. 2, 505--594.

\bibitem{Mor} M. Moreira, {\em Floer homology for global quotient orbifolds.} Master's thesis. Instituto Superior T\'ecnico, 2019. Available online at \url{https://people.math.ethz.ch/~mimoreira/master_thesis.pdf}

\bibitem{Ra} P. Rabinowitz, {\em Periodic solutions of Hamiltonian systems.} Comm. Pure Appl. Math. {\bf 31} (1978), no. 2, 157--184. 

\bibitem{Sei} P. Seidel, {\em A biased view of symplectic cohomology.} In Current developments in mathematics, 2006, 211--253. Int. Press, Somerville, MA, 2008.

\bibitem{Vit}  C. Viterbo, {\em Functors and computations in Floer homology with applications. I.} Geom. Funct. Anal. {\bf 9} (1999), no. 5, 985--1033.

\bibitem{Wa} W. Wang, {\em Existence of closed characteristics on compact convex hypersurfaces in  $\R^{2n}$.} Calc. Var. Partial Differential Equations, {\bf 55} (2016), 1--25.

\bibitem{Wa2} W. Wang, {\em Closed characteristics on compact convex hypersurfaces in $\R^8$.} Adv. Math. {\bf 297} (2016), 93--148.

\bibitem{HLW} W. Wang, X. Hu, Y. Long, {\em Resonance identity, stability, and multiplicity of closed characteristics on compact convex hypersurfaces.} Duke Math. J. {\bf 139} (2007), no. 3, 411--462. 

\bibitem{Zha} D. Zhang, {\em $P$-cyclic symmetric closed characteristics on compact convex $P$-cyclic symmetric hypersurface in $\R^{2n}$.} Discrete Contin. Dyn. Syst. {\bf 33} (2013), no. 2, 947--964. 

\end{thebibliography}
\end{document}